\newcommand{\pr}{{\mathbb P}}
\newcommand{\ep}{{\mathbb E}}
\newcommand{\ZZ}{{\mathbb Z}}
\newcommand{\II}{{\mathbb I}}
\newcommand{\err}{{\rm err}}
\newcommand{\vc}[1]{{\mathbf{#1}}}
\newcommand{\ol}[1]{{\overline{#1}}}
\newcommand{\KK}{{\mathcal{K}}}
\newcommand{\KKCOMP}{\widehat{\KK}_{\rm COMP}}
\newcommand{\cov}{{\rm Cov\;}}
\newcommand{\var}{{\rm Var\;}}
\newcommand{\bin}{{\rm Bin\;}}
\newcommand{\po}{{\rm Po\;}}
\newcommand{\fm}[2]{M_{(#1)}\left( #2 \right)}
  \newtheorem{theorem}{Theorem}[section]
  \newtheorem{corollary}[theorem]{Corollary}
  \newtheorem{proposition}[theorem]{Proposition}
  \newtheorem{lemma}[theorem]{Lemma}
  \newtheorem{definition}[theorem]{Definition}
\newtheorem{remark}[theorem]{Remark}
\title{A negative binomial approximation in group testing}
\author{Letian Yu\thanks{Department of System Engineering and Engineering Management, The Chinese University of Hong Kong, Hong Kong SAR. Email: {\tt{Letian.Yu@link.cuhk.edu.hk}}} \and Fraser Daly\thanks{Department of Actuarial Mathematics and Statistics, Heriot--Watt University, Edinburgh, EH14 4AS, UK. Email: {\tt{F.Daly@hw.ac.uk}}} \and Oliver 
Johnson\thanks{School of Mathematics, University of Bristol, Fry Building, Woodland Road, Bristol, BS8 1UG, UK.
Email: {\tt O.Johnson@bristol.ac.uk}
}}
\date{\today}
\begin{document}

\maketitle
\begin{abstract}
\noindent
We consider the problem of group testing (pooled testing), first introduced by Dorfman. For non-adaptive testing strategies, we refer to a non-defective item as `intruding' if it only appears in positive tests. Such items cause mis-classification errors in the well-known COMP algorithm, and can make other algorithms produce an error. It is therefore of interest to understand the distribution of the number of intruding items.  We show that, under Bernoulli matrix designs, this distribution is well approximated in a variety of senses by a negative binomial distribution, allowing us to understand the performance of the two-stage conservative group testing algorithm of Aldridge.
\end{abstract}

\noindent\textbf{Key words and phrases: }Group testing; Pooled testing; Stein--Chen method.

\vspace{20pt}

\noindent\textbf{MSC 2020 subject classification: }62E17; 60F05; 94A20

\section{Introduction to group testing}

The group testing (pooled testing) problem was introduced by Dorfman \cite{dorfman}, and provides a way of efficiently finding a small number of infected individuals in a large population. The key construction underlying this method is a so-called pooled test: given a subset $S$ of the population, we combine samples from each member of $S$ into a testing pool, and test them all together. We suppose that such a test returns a positive result if and only if at least one person in $S$ is infected. Hence, a negative test allows us to deduce that every person in $S$ is not infected, allowing for efficient screening of individuals. Some inference can be drawn from a positive test, but the analysis is typically more involved. 

By carrying out a series of pooled tests, we hope to efficiently identify all the infected individuals \textit{--} using as few tests as possible, and ideally using simple algorithms which do not require intensive computation. 

The group testing problem has generated an extensive literature, surveyed for example in \cite{johnson49,du}, and a large variety of variations on the problem exist. Group testing has been proposed as a solution to problems in a wide variety fields (see \cite[Section 1.7]{johnson49} for a survey of some of these applications). In engineering and computer science in particular, it has been used to efficiently search computer memories \cite{kautz}, to give a novel data compression algorithm \cite{hong}, to detect high-demand items in databases \cite{cormode}, to bound the performance of multi-access communications channels \cite{wolf}, and in many other problems besides. Due to the shortage of tests early in the COVID-19 pandemic, group testing was a natural solution proposed to find infected individuals, and has indeed been deployed at scale in a variety of countries (see \cite[Section 6]{aldridge7} for a review of some such uses).

One key distinction is whether we are allowed to employ adaptive testing strategies (where the choice of individuals to be tested can depend on the outcome of previous tests) or are restricted to non-adaptive ones (where the test design is chosen in advance). Clearly, the ability to search adaptively cannot harm us, and indeed it is known (see \cite[Section 1.5]{johnson49}) that in this case Hwang's algorithm \cite{hwang}, based on efficient binary search, requires a number of tests which is asymptotically optimal for a wide range of parameters. However, in many circumstances, adaptive algorithms may not be an option, and there is independent mathematical interest in understanding the performance of algorithms which are non-adaptive or are restricted to a small number of stages. 

One motivating example for the study of group testing algorithms with restricted stages  is that of COVID testing. It has been known since the early days of the coronavirus pandemic  that  a sample from one infected individual gives a strong enough  PCR positivity signal  when mixed in a pool of 32 or 64 samples that group testing is a potentially viable strategy \cite{yelin}. However, to take advantage of the ability of PCR machines to perform 96 or more tests at a time in parallel \cite{erlich2} we need to use non-adaptive algorithms. Further, as argued in for example \cite{mutesa2020}, if each round of tests takes a few hours to be processed, then multi-stage binary search algorithms can give information about the infection status of individuals too late to be useful, meaning that the virus could have already been passed on before the results are received. For this reason, in this article we will focus on non-adaptive and two-stage algorithms.

When we work non-adaptively, we must declare our testing strategy in advance, and it is perhaps not immediately obvious which strategy to choose. One simple idea, which we will refer to as Bernoulli testing,  consists of randomly placing each member of the population into each pool with the same probability $p$, each such choice being taken independently of one another. In fact, if there are $k$ infected individuals, it is generally a good strategy to choose $p=1/k$, so that there is on average one infected individual in each test pool. Of course, Bernoulli testing is not the only strategy, and improved performance can be obtained by placing each person in a fixed number of tests at random \cite{johnson42}, or by more advanced test designs \cite{coja}. However, Bernoulli testing is simple to describe and analyse, and generally gives performance \cite[Section 2]{johnson49} within a constant multiple of the best possible, so we will focus on it here.

Having chosen a particular non-adaptive test strategy, a key question is how we find the infected individuals. A variety of algorithms are possible but one particularly simple one is referred to as COMP after its use in the paper \cite{chan}, but dates back at least to the work of Kautz and Singleton \cite{kautz}. This algorithm works as follows: as mentioned above, each person who appears in a negative test is guaranteed to be not infected. Hence, we can build a list of non-infected people by collecting together the people from each negative test. For definiteness, we simply assume that everyone else is infected.

Given enough tests, each non-infected person should appear in at least one negative test, but using arguments based on the coupon-collector problem we can deduce that this may require more tests than we would otherwise hope. If we use insufficiently many tests, the algorithm is likely to fail, with a clear single source of error. That is, if some non-infected person only appears in positive tests, then they will be incorrectly classified as infected. We refer to such a person as `intruding', and the focus of this paper will be to count the number of intruding individuals, which we will refer to as $G$. (Each person declared to be non-infected will definitely be so -- see \cite[Lemma 2.3]{johnson49}.) 

The COMP algorithm succeeds in deducing every individual's infection status exactly if and only if $G=0$, but by understanding the distribution of $G$ we can also consider some related issues. First, as mentioned previously, given perfect testing, COMP never classifies a non-infected person as infected, and can be used to provide a quick screening of the population. Knowledge of $G$ tells us precisely how many healthy people would be wrongly quarantined as a result of this screening, so given a particular tolerance of this effect we could choose the number of tests accordingly.

Second, we can regard COMP as the first stage of a `conservative two-stage group testing' algorithm as described by Aldridge \cite{aldridge6}, where following an initial screening using COMP we choose to test each person who has not received a clean bill of health using individual testing. \cite[Theorem 1]{aldridge6} describes the expected number of tests for such a procedure to succeed. If there are $k$ infected people, clearly the second stage requires $G+k$ tests to succeed, so by understanding the distribution of $G$ we can approximate the probability this two stage algorithm will succeed, which can give more information than the expected value. Further, given an overall budget of $T$ tests, we might wish to know the optimal number of tests $T_1$ to use for the initial COMP stage, and analysing the distribution of $G$ will give insight into this.

Finally, the purely non-adaptive DD algorithm introduced in \cite{johnson33} uses COMP as a first stage of the analysis, and performs a further analysis based on looking for positive tests that contain exactly one `non-screened' item. Informally, we know that DD will succeed if the number of intruding items $G$ is much less than the number of defectives $k$, so again by understanding the distribution of $G$ we gain insight into the performance of DD.

The structure of the remainder of the paper is as follows. In Section \ref{sec:notation} we give a more formal introduction to the group testing problem, including introducing notation, defining the COMP algorithm and proving some simple properties of the number of intruding items $G$. In Section \ref{sec:main} we show that $G$ can be well approximated by a negative binomial distribution, first by considering a limiting argument that shows convergence of all falling moments in an asymptotic limit and then giving a more detailed bound based on a novel adaptation of the Stein--Chen method which gives bounds in finite blocklength settings as well. Section \ref{sec:implications} discusses the implications of these results for various group testing algorithms, before a brief conclusion is given in Section \ref{sec:conc}. 

\section{Notation and definitions} \label{sec:notation}
\subsection{Group testing setup}

We now state the group testing problem in slightly more formal language and introduce some notation similar to that of \cite{johnson49}. We will write $n$ for the total population of individuals, which we will refer to as `items'. Instead of referring to infected and healthy individuals, we will follow standard group testing terminology by calling them `defective' and `non-defective' respectively. We  write $\KK$ for the set of defective items (or defective set for short) and $k = | \KK |$ for the total number of defective items, and $T$ for the number of tests.

As is standard, we can represent a non-adaptive testing strategy by a binary $T \times n$ test matrix $X$, with rows corresponding to tests and columns corresponding to items. Here the entry $X_{ti} = 1$ means that item $i$ appears in test $t$. In this paper we focus on Bernoulli testing, where the $(X_{ti})$ are independent Bernoulli random variables with parameter $p$. We will refer to this as a `Bernoulli test design with parameter $p$', and this design will apply throughout. Of particular interest will be the case $p = 1/k$.

The outcome of test $t$ is represented as a binary value $Y_t$  (where $Y_t = 1$ means a positive test) which can be calculated for this test matrix as 
\begin{equation} \label{eq:grouptestresult}  Y_t = \bigvee_{i \in \KK} X_{ti}, \end{equation}
where $\bigvee$ represents a standard binary OR, capturing the fact that each test is positive if and only if it contains at least one of the items in $\KK$.

As in \cite{johnson33} we write $q_0 = (1-p)^k$, noting that each test is positive independently with probability $1-q_0$ (since it is negative if and only if it contains none of the $k$ defectives). 

Again, as in \cite{johnson33}, \cite{coja} and other papers we will often study what is referred to in \cite{johnson49} as the {\em sparse regime}. In this setting the number of items $n$ tends to infinity and the number of defectives $k = k(n) = n^\theta$ for some explicit parameter $\theta\in(0,1)$. In this context it is natural to consider an asymptotic regime where the number of tests $T= (c/q_0) k \log(n)$ for some constant $c$,  noting that if $p=1/k$ then asymptotically $q_0$ converges to $e^{-1}$. Here and throughout our work we write $\log$ for the natural logarithm.

Another asymptotic setting of interest (see for example \cite{aldridge5} and \cite[Section 5.5]{johnson49}) is referred to as the {\em linear regime}. Here, again $n$ tends to infinity, and the number of defectives $k = k(n) = \beta n $ for some explicit parameter $\beta \in (0,1)$. In this context, we consider an asymptotic regime where the number of tests $T=c n$ for some constant $c$. Although in this regime Aldridge \cite{aldridge5} proved that no non-adaptive algorithm can outperform individual testing, there is still interest in understanding the performance of two-stage or adaptive algorithms, and analysis of $G$ of the kind presented in this paper can help with this.

However, in many practical group testing contexts we are also interested in what we refer to as the {\em finite blocklength setting}, following terminology popularised for example by the work of Polyanskiy {\em et al.} \cite{polyanskiy2}. In this context we wish to understand the performance of algorithms in solving concrete problems such as $n=500$, $k=10$ (see \cite{johnson33}), where asymptotic bounds may not necessarily give the best guide to actual performance. Interest in finite blocklength problems was particularly prompted by the COVID pandemic, where for example the use of 96-well PCR plates \cite{erlich2} means that the number of tests may be bounded by (a multiple of) 96. This setting has typically been less well explored than asymptotic settings such as the sparse or linear regimes described above, however we provide some bounds in this context.  

At various points in our analysis we will find it useful to work in terms of the falling moments of various random variables, and  we will write $\fm{s}{Y} := \ep \left( Y \right)_{(s)} = \ep Y(Y-1) \ldots (Y-s+1) = \ep Y!/(Y-s)!$ for the $s$th falling moment of random variable $Y$. 
We will also write $w(\vc{u})$ for the Hamming weight of the binary vector $\vc{u}$.

\subsection{COMP algorithm}

The COMP algorithm uses the test outputs $Y$ and matrix $X$ to produce an estimate of the defective set $\KK$ which we will write as $\KKCOMP$. In fact it is easier to consider the complement of $\KKCOMP$: an item will appear in this complement if it appears in some negative test. Formally speaking
\begin{equation} \label{eq:COMPdeduce}
 \KKCOMP^c = \left\{  \mbox{$i$: $Y_s = 0$ for some $s$ with $X_{si} = 1$} \right\}. \end{equation}
Notice that if item $j$ really is defective (i.e. $j \in \KK$) then for every test $s$ with $X_{sj} = 1$ then $Y_s = 1$ (by the definition of the group testing action in \eqref{eq:grouptestresult}), so that \eqref{eq:COMPdeduce} implies that $j$ is not in $\KKCOMP$. In other words $\KK \subseteq \KKCOMP$ (see \cite[Lemma 2.3]{johnson49}).

COMP is an attractive algorithm because it is simple to perform and interpret, and because of this performance guarantee in one direction. However, in practice if we don't perform enough tests then $\KKCOMP$ can be significantly larger than $\KK$, meaning that many non-defective items would be misclassified, potentially causing problems in healthcare related situations where unnecessary quarantine could result.

For this reason, Aldridge \cite{aldridge6} proposed what he refers to as a conservative two-stage algorithm. Here, given a total budget of $T$ tests, we should use $T_1$ of them to perform the COMP algorithm in the usual way, and then the remaining $T_2 := T-T_1$ tests to perform individual testing of each of the items in $\KKCOMP$, which have not been classified as non-defective. While this algorithm may be inferior in performance to a two-stage algorithm which uses the $T_1$ tests to perform the DD algorithm \cite{johnson33} followed by individual testing, it has the advantage of being transparent to perform for healthcare professionals without a mathematical background.

Clearly the conservative two-stage algorithm of Aldridge \cite{aldridge6} will succeed in finding all the defective items if the number of second stage tests  is greater than or equal to the number of items to be tested. That is, it will succeed when $T_2 = T-T_1 \geq |\KKCOMP|$, meaning that we would like to find the size of $\KKCOMP$. Further, for a given budget of $T$ tests, since larger values of $T_1$ give smaller  $\KKCOMP$ (more stage one tests allow more non-defective items to be screened out) but leave fewer tests available in the second stage, we would like to find a sensible choice of $T_1$ that manages this tradeoff.

\subsection{Basic properties of intruding items}

We now define the key property we will study in this paper:
\begin{definition} \label{def:intruding}
We define
\begin{enumerate}
\item 
a non-defective item as `intruding' if it only appears in positive tests.
\item
the binary random variables 
$$ G_i =\II \left(\mbox{item $i$ is non-defective and intruding} \right),$$
$\vc{G} = (G_1, \ldots, G_n)$ the binary vector with these components, and $G = \sum_{i} G_i = w(\vc{G})$.
\end{enumerate}
\end{definition}

More formally, if non-defective item $\ell$ is intruding then $Y_s = 1$ for every $s$ with $X_{s\ell}=1$, so that (by \eqref{eq:COMPdeduce})  we know $\ell \notin \KKCOMP^c$, so $\ell \in \KKCOMP$. In other words, if a non-defective item is intruding then COMP will mistakenly declare it to be defective. Since non-intruding items are declared to be non-defective by COMP, we know that the size of $\KKCOMP$ (which determines the success of the two-stage algorithm of Aldridge \cite{aldridge6}) is exactly $|\KKCOMP| = k + G$. 

For a given non-defective item $i$ we can work out the marginal distribution of $G_i$ relatively easily:
\begin{lemma} \label{lem:gimarginal} Under a Bernoulli test design with parameter $p$, for each non-defective item $i$, the marginal distribution of $G_i$ is Bernoulli with parameter $(1-p q_0)^T$, where we recall that we write $q_0 = (1-p)^k$.
\end{lemma}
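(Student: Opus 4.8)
The plan is to compute $\pr(G_i = 1)$ directly, since $G_i$ takes values in $\{0,1\}$ by definition and so its law is completely determined by this single number. Fix a non-defective item $i$, so $i \notin \KK$. By Definition~\ref{def:intruding}, $G_i = 1$ precisely when item $i$ appears only in positive tests, i.e.\ when there is no test $t$ with $X_{ti} = 1$ and $Y_t = 0$. Writing $B_t$ for the ``bad'' event $\{X_{ti} = 1,\, Y_t = 0\}$, we thus have $\{G_i = 1\} = \bigcap_{t=1}^T B_t^c$.

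Next I would establish that $B_1, \ldots, B_T$ are independent and compute $\pr(B_t)$. Independence across $t$ is immediate because $B_t$ is a function of row $t$ of the test matrix $X$ alone, and distinct rows are independent under a Bernoulli design. For a single test $t$, the key point is that, since $i \notin \KK$, the outcome $Y_t = \bigvee_{j \in \KK} X_{tj}$ depends only on the entries $\{X_{tj} : j \in \KK\}$, which are independent of $X_{ti}$. Hence $\pr(B_t) = \pr(X_{ti} = 1)\,\pr(Y_t = 0) = p\, q_0$, using $\pr(X_{ti}=1) = p$ together with the already-noted fact that each test is negative independently with probability $q_0 = (1-p)^k$.

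Combining these facts gives $\pr(G_i = 1) = \prod_{t=1}^T \pr(B_t^c) = (1 - p q_0)^T$, which is the claim. There is no real obstacle here; the only step requiring a little care is the factorization $\pr(X_{ti}=1,\, Y_t = 0) = \pr(X_{ti}=1)\,\pr(Y_t=0)$, which relies crucially on $i$ being non-defective, so that $X_{ti}$ plays no role in determining $Y_t$. (If $i$ were defective, $X_{ti}=1$ would force $Y_t=1$ and the computation would be quite different — consistent with the fact, noted earlier, that defective items are never intruding.)
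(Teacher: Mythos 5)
Your proof is correct and follows essentially the same route as the paper: identify the per-test ``bad'' event (item $i$ appears and the test is negative), compute its probability as $pq_0$ using the independence of $X_{ti}$ from the defective columns, and use independence across tests to get $(1-pq_0)^T$. You simply spell out the factorization step that the paper leaves implicit.
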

\begin{proof}
Item $i$ is intruding if no test contains item $i$ and no  defective item. The probability of the event that test $t$ contains item $i$ and no defective item is $p(1-p)^k = p q_0$, so since successive tests are independent, the chance that we avoid this event for each test is $(1-p q_0)^T$. 
\end{proof}

\section{Main results} \label{sec:main}

\subsection{Moments and associated random variables}

If the $G_i$ were independent, then the analysis of the distribution of $G$ would be easy: using Lemma \ref{lem:gimarginal} then $G$ would be binomial with parameters $n-k$ and $(1-p q_0)^T$. However, there is a dependence between the $G_i$. In fact, if we learn that a given item is intruding, that suggests there might be more positive tests than average, which would imply that other items are more likely to be intruding.

We formalise this intuition by showing that the $G_i$ are more likely be equal to 1 together than independence would imply. That means that if COMP fails, it is more likely to fail badly (with a large total $G$) than a naive analysis based on Lemma \ref{lem:gimarginal} might suggest. We prove this in two ways, first by showing in Corollary \ref{cor:poscorr} that the $G_i$ are pairwise positively correlated, and second by proving a stronger result (Proposition \ref{prop:ass}) which shows that the $G_i$ have the property of association (see Definition \ref{def:ass}), which is stronger than positive correlation (see the discussion in \cite{esary} for example).

In fact, we will deduce the  positive correlation result (Corollary \ref{cor:poscorr}) from an expression  for the falling moments of $G$ (Proposition \ref{prop:fmg}), which may be of independent interest, extending the result for $\ep G$ implicit in \cite[Theorem 1]{aldridge6}). We describe the distribution of $G$ as a binomial mixture of binomial distributions as follows. As in \cite{johnson33},  if we 
let $M_{0}$ be the number of negative tests then we know that:
\begin{align}
M_{0} & \sim \bin\left(T,(1-p)^{k}\right),  \label{eq:M0def} \\ 
G \mid {M_{0}=m} & \sim \bin \left(n-k,(1-p)^{m}\right). \label{eq:GM0def}
\end{align}
The first result follows because a test is negative if and only
if it contains no defective items, and for Bernoulli testing this occurs
independently across tests with probability $q_0 = (1-p)^{k}$. Moreover, the second result follows because a non-defective item is intruding if and only if it does not appear in any of the $M_0$ negative tests, and each non-defective item is present in a given test independently with probability $p$. We can use this to prove the following result:
\begin{proposition} \label{prop:fmg}
Under a Bernoulli test design with parameter $p$, 
 the falling moments of $G$ are given by
\begin{equation} \label{eq:fmg}
\fm{s}{G} =  \ep G(G-1) \ldots (G-s+1) = \binom{n-k}{s} s!  \left( 1 - q_0 \left( 1- (1-p)^s \right) \right)^T,
\end{equation}
for any integer $s \geq 0$.
\end{proposition}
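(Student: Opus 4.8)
The plan is to compute the falling moment $\fm{s}{G}$ by conditioning on $M_0$, the number of negative tests, using the two-layer mixture structure in \eqref{eq:M0def}--\eqref{eq:GM0def}. First I would recall the elementary fact that for a binomial random variable $B \sim \bin(N, r)$ the falling moments take the clean form $\fm{s}{B} = (N)_{(s)} r^s = \binom{N}{s} s!\, r^s$. Applying this to the conditional law $G \mid \{M_0 = m\} \sim \bin(n-k, (1-p)^m)$ gives $\ep\left[ (G)_{(s)} \mid M_0 = m \right] = \binom{n-k}{s} s!\, (1-p)^{ms} = \binom{n-k}{s} s!\, \bigl( (1-p)^s \bigr)^m$.

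Next I would take the expectation over $M_0 \sim \bin(T, q_0)$ by the tower property:
\begin{equation}
\fm{s}{G} = \binom{n-k}{s} s!\; \ep\left[ \bigl( (1-p)^s \bigr)^{M_0} \right].
\end{equation}
The remaining expectation is just the probability generating function of a binomial distribution evaluated at $z = (1-p)^s$, namely $\ep[z^{M_0}] = (1 - q_0 + q_0 z)^T = (1 - q_0(1-z))^T$. Substituting $z = (1-p)^s$ yields $\ep\left[ ((1-p)^s)^{M_0} \right] = \left( 1 - q_0(1 - (1-p)^s) \right)^T$, which gives exactly \eqref{eq:fmg}. The cases $s = 0$ (both sides equal $1$) and $s > n-k$ (both sides vanish, the binomial coefficient being zero) are consistent with this formula and need no separate treatment.

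The argument is essentially a routine two-step conditioning computation, so there is no serious obstacle; the only points requiring a little care are citing or briefly justifying the binomial falling-moment identity and the binomial probability generating function, and noting that $G \mid M_0$ really is binomial with the stated parameter — which is already supplied by \eqref{eq:GM0def} in the excerpt, with the justification that a non-defective item is intruding precisely when it avoids all $M_0$ negative tests, independently across the $n-k$ non-defective items. I would present the binomial falling-moment identity as a short lemma or an inline remark, since it is the one ingredient a careful reader might want spelled out.
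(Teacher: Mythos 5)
Your proposal is correct and follows essentially the same route as the paper: the authors also invoke the binomial falling-moment identity (their Lemma \ref{lem:fmbin}) together with the mixture structure \eqref{eq:M0def}--\eqref{eq:GM0def} and the law of iterated expectation, omitting the details you have helpfully spelled out via the binomial probability generating function.
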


We prove this result using the following intermediate lemma:



\begin{lemma}[Falling Moments of Binomial Distribution] 
\label{lem:fmbin} Suppose $X \sim \bin(L,t)$. Then the $s$th falling moment of $X$ is given by:
\begin{equation}
\fm{s}{X} = \binom{L}{s} s! \cdot t^{s}.
\end{equation}
\end{lemma}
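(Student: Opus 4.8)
The plan is to compute the falling moment $\fm{s}{X}$ directly. I would give the probabilistic argument via indicator variables as the main line, since it makes the appearance of the factor $\binom{L}{s} s!$ transparent, and mention the probability-generating-function computation and a direct summation as equivalent alternatives.

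First I would dispose of the degenerate cases: if $s = 0$ the claimed identity is $1 = 1$, and if $s > L$ then $\binom{L}{s} = 0$ while, since $X \leq L$ almost surely, the product $X(X-1)\cdots(X-s+1)$ always contains a zero factor, so both sides vanish. Hence it suffices to treat $1 \leq s \leq L$.

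For the main case, write $X = \sum_{i=1}^{L} B_i$, where $B_1, \ldots, B_L$ are independent Bernoulli$(t)$ random variables. The key observation is that $X(X-1)\cdots(X-s+1)$ equals the number of ordered $s$-tuples $(i_1, \ldots, i_s)$ of \emph{distinct} indices from $\{1, \ldots, L\}$ with $B_{i_1} = \cdots = B_{i_s} = 1$; equivalently, $X(X-1)\cdots(X-s+1) = \sum B_{i_1} \cdots B_{i_s}$, the sum running over all such ordered tuples. Taking expectations, exchanging sum and expectation, and using independence, each term contributes $\ep B_{i_1} \cdots B_{i_s} = t^s$, and the number of ordered $s$-tuples of distinct indices is $L(L-1)\cdots(L-s+1) = \binom{L}{s} s!$. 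Hence $\fm{s}{X} = \binom{L}{s}s!\, t^s$, as required.

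There is essentially no obstacle here; the only point needing a line of justification is the identity expressing $X(X-1)\cdots(X-s+1)$ as a sum over ordered tuples of distinct successful indices. If one prefers to avoid that combinatorial step, an equivalent route is to use the probability generating function $\ep z^X = (1 - t + tz)^L$ together with $\fm{s}{X} = \left. \frac{d^s}{dz^s} \ep z^X \right|_{z=1}$; differentiating $s$ times yields $L(L-1)\cdots(L-s+1)\, t^s (1 - t + tz)^{L-s}$, which at $z = 1$ gives $(L)_{(s)} t^s = \binom{L}{s} s!\, t^s$. A third option is to expand $\fm{s}{X} = \sum_{j=s}^{L} (j)_{(s)} \binom{L}{j} t^j (1-t)^{L-j}$, apply the coefficient identity $(j)_{(s)}\binom{L}{j} = \binom{L}{s} s! \binom{L-s}{j-s}$, reindex by $i = j - s$, and recognise the remaining sum $\sum_{i=0}^{L-s} \binom{L-s}{i} t^i (1-t)^{L-s-i} = 1$.
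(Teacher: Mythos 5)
Your proof is correct and complete. The paper itself states Lemma \ref{lem:fmbin} without proof, treating it as a standard fact, so there is no argument of the authors' to compare against directly; however, your main line --- writing $X=\sum_i B_i$ and identifying $X(X-1)\cdots(X-s+1)$ with the number of ordered $s$-tuples of distinct successful indices --- is exactly the mechanism the paper invokes elsewhere, namely in the alternative proof of Proposition \ref{prop:fmg}, where the falling moment of a sum of binary variables is expanded as $s!\sum_{|S|=s}\ep\bigl(\prod_{i\in S}G_i\bigr)$ via the Vandermonde-type identity of \cite{johnson26}. Your handling of the degenerate cases $s=0$ and $s>L$ is a nice touch (both sides vanish for $s>L$, consistent with the convention $\binom{L}{s}=0$), and the two alternatives you sketch (probability generating function and direct summation with the identity $(j)_{(s)}\binom{L}{j}=\binom{L}{s}s!\binom{L-s}{j-s}$) are both valid; the tuple-counting argument is the one that generalises most directly to the dependent variables $G_i$ appearing later in the paper, which is presumably why the authors adopt that viewpoint there.
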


Proposition \ref{prop:fmg} follows using Lemma \ref{lem:fmbin} when we recall the distributions of $M_0$ and $G \mid {M_0 = m}$ from \eqref{eq:M0def} and \eqref{eq:GM0def}, and apply the law of iterated expectation to express $\fm{s}{G}  = \ep \left[ \ep \left( (G)_{s} | {M_0} \right) \right]$. We omit the details for brevity.

Note that we can give an alternative proof of Proposition \ref{prop:fmg}, using \cite[Lemma 2.2]{johnson26}, which gives a multinomial-type expansion for falling factorials based on the Vandermonde identity. This expansion simplifies in the case of binary random variables to give the fact that the falling moment can be expressed as a sum over sets:
\begin{equation}
\fm{s}{G} = s! \sum_{S: |S| = s} \ep \left( \prod_{i \in S}
G_{i} \right).
\end{equation}
The summation over sets contributes $s! \binom{n-k}{s}$ equal expectation terms, each one of which equals the probability that all elements of a specified set are intruding, which corresponds to the event that none of them ever appear in a negative test, so the result follows by independence of all test items.

Using Proposition \ref{prop:fmg} we can deduce the following result that shows that the $G_i$ have positive pairwise correlation:

\begin{corollary} \label{cor:poscorr}
For non-defective items $i \neq j$:
\begin{equation} \label{eq:poscov} \cov(G_i, G_j) = 
\left( 1 - q_0 \left( 2p - p^2 \right) \right)^T
- \left( 1 - q_0 p \right)^{2T}
\geq 0.\end{equation}
\end{corollary}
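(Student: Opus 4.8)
The plan is to read off everything from Proposition \ref{prop:fmg} with $s=2$ together with the marginal computation of Lemma \ref{lem:gimarginal}. First I would expand the second falling moment of $G$ as a sum over ordered pairs of distinct indices: since each $G_i$ is binary, $G(G-1) = \sum_{i\neq j} G_i G_j$, where the sum runs over the $(n-k)(n-k-1)$ ordered pairs of distinct non-defective items. Taking expectations and using the exchangeability of the $G_i$ (all non-defective items play symmetric roles under a Bernoulli design) gives $\fm{2}{G} = (n-k)(n-k-1)\,\ep(G_i G_j)$ for any fixed $i\neq j$. Comparing with \eqref{eq:fmg}, which states $\fm{2}{G} = \binom{n-k}{2} 2! \left(1 - q_0(1-(1-p)^2)\right)^T = (n-k)(n-k-1)\left(1-q_0(2p-p^2)\right)^T$, I can cancel the combinatorial factor and conclude $\ep(G_i G_j) = \left(1-q_0(2p-p^2)\right)^T$.

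Next I would combine this with Lemma \ref{lem:gimarginal}, which gives $\ep G_i = \ep G_j = (1-pq_0)^T$, so that $\cov(G_i,G_j) = \ep(G_i G_j) - \ep G_i\,\ep G_j = \left(1-q_0(2p-p^2)\right)^T - (1-pq_0)^{2T}$, which is exactly the claimed identity \eqref{eq:poscov}. (One could alternatively cite the set-sum expansion mentioned after Proposition \ref{prop:fmg} to identify $\ep(G_i G_j)$ directly as the probability that both $i$ and $j$ are intruding, i.e.\ that no test contains one of them together with no defective; that event has probability $1 - p(1-p)q_0 - p(1-p)q_0 - \cdots$, but the cleanest route is simply to invoke \eqref{eq:fmg}.)

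Finally, for the nonnegativity I would compare the two bases before raising to the power $T$. Since $p,q_0\in[0,1]$ we have $pq_0\le 1$, so $1-pq_0\in[0,1]$ and hence $(1-pq_0)^2\in[0,1]$ as well. A direct expansion gives $\bigl(1-q_0(2p-p^2)\bigr) - (1-pq_0)^2 = \bigl(1-2pq_0+p^2q_0\bigr) - \bigl(1-2pq_0+p^2q_0^2\bigr) = p^2 q_0(1-q_0) \ge 0$, so $1-q_0(2p-p^2) \ge (1-pq_0)^2 \ge 0$. Both sides being nonnegative, raising to the power $T$ preserves the inequality, giving $\left(1-q_0(2p-p^2)\right)^T \ge (1-pq_0)^{2T}$ and hence $\cov(G_i,G_j)\ge 0$.

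The computation is short and I do not anticipate a genuine obstacle; the only place requiring care is the bookkeeping in passing from $\fm{2}{G}$ to $\ep(G_i G_j)$ (ordered versus unordered pairs, and the factor $\binom{n-k}{2}2! = (n-k)(n-k-1)$) and, for the inequality, keeping track of the $q_0$ versus $q_0^2$ terms so that the difference collapses to $p^2 q_0(1-q_0)$.
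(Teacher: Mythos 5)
Your proof is correct and follows essentially the same route as the paper: both extract $\cov(G_i,G_j)$ from Proposition \ref{prop:fmg} with $s=1,2$ together with the exchangeability of the $G_i$ — you by reading off $\ep(G_iG_j)=\fm{2}{G}/\bigl((n-k)(n-k-1)\bigr)$ directly from the pair-sum expansion of $G(G-1)$, the paper by equating two expressions for $\var(G)$, which is the same algebra. You additionally spell out the nonnegativity via $\bigl(1-q_0(2p-p^2)\bigr)-(1-pq_0)^2=p^2q_0(1-q_0)\geq 0$ together with both bases lying in $[0,1]$, a step the paper leaves implicit.
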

\begin{proof}
We consider the variance of $G$ in two different ways:
\begin{equation} \label{eq:var1}
\var(G)  
= \fm{2}{G} + \ep G - (\ep G)^2,
\end{equation}
and (using the symmetry between pairs of $G_i$ implied by the Bernoulli matrix design)
\begin{align} 
\var(G) & = \sum_{i} \var(G_i) + \sum_{i \neq j} \cov(G_i,G_j) \nonumber \\
& =  (n-k) \left( \ep G_i - (\ep G_i)^2 \right) + (n-k)(n-k-1) \cov(G_i, G_j) \nonumber \\
& =  \ep G -  \frac{(\ep G)^2}{n-k} + (n-k)(n-k-1) \cov(G_i, G_j), \label{eq:var2}
\end{align}
using the fact that for a binary random variable $Y$ we have $\var(Y) = \ep Y^2 - (\ep Y)^2 = \ep Y - (\ep Y)^2$ and that $(n-k) \ep G_i = \ep G$. Now, equating \eqref{eq:var1} and \eqref{eq:var2} we obtain that
\begin{align*}
\lefteqn{ (n-k)(n-k-1) \cov(G_i, G_j) } \\
& = \fm{2}{G} - (\ep G)^2 \left(1- \frac{1}{n-k} \right) \\
& = (n-k)(n-k-1) \left[ \left( 1 - q_0 \left( 1- (1-p)^2 \right) \right)^T
- \left( 1 - q_0 \left( 1- (1-p) \right) \right)^{2T} \right],
\end{align*}
using the expressions for $\fm{2}{G}$ and $\fm{1}{G}$ from Proposition \ref{prop:fmg}, and the result follows on cancellation.
\end{proof}

However, we can prove a stronger property than just positive correlation. Recall the following definition:
\begin{definition}[\cite{esary}] \label{def:ass} Random variables $\vc{X} = (X_1, X_2, \ldots, X_n)$ are (positively) {\em associated} if, for all increasing functions $f$ and $g$,
\begin{equation} \label{eq:ass} \ep \left( f(\vc{X}) g(\vc{X}) \right)
\geq \ep \left( f(\vc{X})  \right) \ep \left(  g(\vc{X}) \right). \end{equation}
\end{definition}

We will prove the following proposition:
\begin{proposition} \label{prop:ass} Under a Bernoulli test design, the random variables $\vc{G} = (G_1, G_2, \ldots, G_n)$ are associated.
\end{proposition}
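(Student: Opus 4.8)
The plan is to realise each $G_i$ as a coordinatewise increasing function of a family of \emph{independent} random variables, and then invoke two standard closure properties of association from \cite{esary}: that any family of independent random variables is associated, and that if $\vc{X}$ is associated and $h_1,\dots,h_m$ are increasing functions then $(h_1(\vc{X}),\dots,h_m(\vc{X}))$ is associated.

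The obstacle to applying this directly to the raw matrix is that $G_i$ is \emph{not} an increasing function of the entries $(X_{ti})$: for a non-defective item $i$, setting $X_{ti}=1$ in more tests $t$ can only hurt item $i$'s chance of being intruding (it risks placing $i$ in a negative test), whereas setting $X_{tj}=1$ for a defective $j\in\KK$ helps, since it tends to turn test $t$ positive. The fix is to flip signs in exactly the non-defective columns. Concretely, for each test $t$ and each non-defective item $i$ set $Z_{ti}:=1-X_{ti}$, and leave the defective columns $X_{tj}$ ($j\in\KK$) unchanged. Since each of these is a deterministic (bijective) transformation of a distinct original Bernoulli entry, the collection $\mathcal{W}:=\{Z_{ti}:t\le T,\ i\notin\KK\}\cup\{X_{tj}:t\le T,\ j\in\KK\}$ is a family of independent random variables, hence associated.

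Next I would write, for a non-defective item $i$,
\[
G_i \;=\; \prod_{t=1}^{T}\Bigl(1-X_{ti}\prod_{j\in\KK}(1-X_{tj})\Bigr)
\;=\;\prod_{t=1}^{T}\Bigl(1-(1-Z_{ti})\prod_{j\in\KK}(1-X_{tj})\Bigr),
\]
the first equality holding because item $i$ is intruding precisely when there is no test $t$ that both contains $i$ (i.e.\ $X_{ti}=1$) and is negative (i.e.\ $X_{tj}=0$ for all $j\in\KK$). One then checks that the right-hand side is increasing in each argument of $\mathcal{W}$: increasing any $Z_{ti}$ decreases $(1-Z_{ti})$ and hence increases the corresponding factor, and increasing any $X_{tj}$ with $j\in\KK$ decreases $(1-X_{tj})$ and again increases that factor; each factor lies in $[0,1]$, so the product of these nonnegative increasing functions is increasing. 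For a defective item $i\in\KK$ we have $G_i\equiv 0$, which is (trivially) an increasing function of $\mathcal{W}$.

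Finally, applying the second closure property to the increasing functions $G_1,\dots,G_n$ of the associated family $\mathcal{W}$ shows that $\vc{G}=(G_1,\dots,G_n)$ is associated. The one point genuinely needing care is the bookkeeping of monotonicity directions — in particular the realisation that one must flip the non-defective columns but \emph{not} the defective ones, after which the verification is routine; everything else is an immediate appeal to the closure properties of association.
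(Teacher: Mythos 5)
Your proof is correct, and it takes a genuinely different route from the paper. The paper conditions on the number of negative tests $M_0$, writes $\pr(\vc{G}=\vc{g})$ as a binomial mixture of product Bernoulli measures, and verifies the FKG lattice condition $\pr(\vc{G}=\vc{x}\vee\vc{y})\pr(\vc{G}=\vc{x}\wedge\vc{y})\geq\pr(\vc{G}=\vc{x})\pr(\vc{G}=\vc{y})$ by a symmetrization over pairs of mixture indices $(m,\ell)$, then invokes Proposition 1 of \cite{fortuin}. You instead exhibit each $G_i$ directly as a coordinatewise increasing function of the independent family obtained by flipping the non-defective columns of the test matrix, $Z_{ti}=1-X_{ti}$ for $i\notin\KK$, and appeal to the two closure properties of association in \cite{esary} (independent families are associated; increasing functions of an associated family are associated). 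The representation $G_i=\prod_{t}\bigl(1-(1-Z_{ti})\prod_{j\in\KK}(1-X_{tj})\bigr)$ is the correct indicator identity, each factor lies in $[0,1]$ and is increasing in every coordinate of your family $\mathcal{W}$, and a product of nonnegative increasing functions is increasing, so the argument goes through; the constant $G_i\equiv 0$ for $i\in\KK$ is harmless. Your route is more elementary, avoids any computation with the joint mass function, and generalises immediately to designs where the matrix entries are independent but not identically distributed (e.g.\ item- or test-dependent inclusion probabilities), since it never uses the exchangeability that underlies the mixture representation. The paper's route is tied to the conditional structure given $M_0$, but it has the side benefit of working directly with the explicit mixture form of the law of $\vc{G}$ that is used elsewhere in the analysis. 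Both proofs are complete and correct.
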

\begin{proof} See Appendix \ref{app:proofass}. \end{proof}

Combining Proposition \ref{prop:ass} with Theorem 3.1 of \cite{denuit}, we find that $G$ is larger, in a convex sense, than a binomial random variable $H$ with parameters $n-k$ and $(1-pq_0)^T$. That is,  we have that $\ep g(G)\geq \ep g(H)$ for all real-valued functions $g$ with $g(x+1)-2g(x)+g(x-1)\geq0$ for all positive integers $x$, and for which the expectations exist (where we note from Property 3.4 of \cite{DLU} that convex ordering on the integers is equivalent to convex ordering on the real line). This formalises the notion that $G$ is more variable than it would be if the $G_i$ were independent. 

The function $g(x)=x!/(x-s)!$ satisfies this convexity condition, since direct calculation gives that $g(x+1)-2g(x)+g(x-1) = s(s-1) (x-1) \ldots (x-s+2)$ in this case. We thus deduce that (see Lemma \ref{lem:fmbin} below for the value of $\fm{s}{H})$
\begin{equation} \label{eq:momlowbd}
\fm{s}{G}\geq\fm{s}{H}=\binom{n-k}{s}s!(1-pq_0)^{sT},
\end{equation}
i.e., the falling moments of $H$ act as lower bounds for those of $G$. Indeed, direct calculation gives that the ratio
\begin{equation}
    \frac{\fm{s}{G}}{\fm{s}{H}} = \left( \frac{ 1- q_0 (1- (1-p)^s)}{(1-p q_0)^s} \right)^T =: R(s)^T,
\end{equation}
where $R(s+1) - R(s) = (p q_0 (1-q_0)) \left( 1 - (1-p)^s
\right) (1-p q_0)^{-s-1} \geq 0$, so that the ratio between successive falling moments of $G$ and $H$ is increasing in $s$.

Some numerical illustration of this is given in Table \ref{fig:momcomp} below, along with comparison of falling moments of $G$ with those of other distributions which are more suitable than $H$ as approximations of $G$ and which we now discuss in more detail.

\subsection{Negative binomial approximation}\label{sec:nbapprox}

In this subsection, we start to show that the distribution of $G$ can be approximated by $Z$, where $Z \sim \mbox{NB}(r,q)$ follows the negative binomial distribution. For concreteness, we use the parameterization where the probability mass function for the negative binomial distribution is 
\begin{equation} \label{eq:negbinpar} 
\pr(Z=z) = f(z; r, q) := \frac{\Gamma(z+r)}{\Gamma(r) z !} q^{r}(1-q)^{z} \mbox{\;\; for $z=0,1,2 \ldots$}.\end{equation}
Note that in the case of integer $r$, the normalization constant
$\frac{\Gamma(z+r)}{\Gamma(r) z !} = \binom{z+r-1}{z}$, and $Z$ can be interpreted in terms of the number of failures to see the $r$th success in a sequence of Bernoulli trials. However, we do not require $r$ to be an integer here. 

The parameter $r$ is sometimes referred to as the dispersion. In this sense it is worth noting that the case $r=1$ corresponds to a geometric random variable (as mentioned above) and the limiting regime $r \rightarrow \infty$ and $q = r/(r+\lambda)$ (which corresponds to a mean-preserving limit -- see Lemma \ref{lem:fmnb} below) gives the mass function of a Poisson random variable with mean $\lambda$. However, in the finite blocklength setting we will see that $G$ is often well approximated by a distribution with $1 \ll r \ll \infty$, meaning that neither the geometric nor Poisson approximation are valuable.

One natural question is that of which negative binomial distribution (which choice of parameters) to use. We argue that one natural choice is based on a standard moment matching argument -- since we need two parameters, we need to set 
$\ep G = \ep Z$ and $\ep G^2 = \ep Z^2$. In fact, equivalently, since Proposition \ref{prop:fmg} and 
Lemma \ref{lem:fmnb} below give closed form expressions for the falling moments of $G$ and $Z$, it is 
actually easier to solve $\fm{s}{G} = \fm{s}{Z}$ for $s=1,2$. It is a straightforward exercise involving the Gamma function to prove the following:

\begin{lemma} \label{lem:fmnb}
 (Falling moments of negative binomial distribution) The falling moments of $Z \sim \mbox{NB}(r,q)$ are given by: 
\begin{equation} \label{eq:fmnb}
\fm{s}{Z} = \frac{\Gamma(s+r)}{\Gamma(r)}\left(\frac{1-q}{q}\right)^{s}.
\end{equation}
\end{lemma}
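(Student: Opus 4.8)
The plan is to compute the falling moment directly from the probability mass function in \eqref{eq:negbinpar}, exploiting the fact that the factor $z(z-1)\cdots(z-s+1)$ cancels against part of the $z!$ in the denominator and, after an index shift, leaves behind the mass function of a negative binomial distribution with a shifted dispersion parameter.

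Concretely, I would write
\[
\fm{s}{Z} = \sum_{z=s}^{\infty} \frac{z!}{(z-s)!}\cdot\frac{\Gamma(z+r)}{\Gamma(r)\,z!}\,q^{r}(1-q)^{z} = \frac{q^{r}}{\Gamma(r)}\sum_{z=s}^{\infty} \frac{\Gamma(z+r)}{(z-s)!}(1-q)^{z},
\]
and then substitute $w = z-s$ to obtain
\[
\fm{s}{Z} = \frac{q^{r}(1-q)^{s}}{\Gamma(r)}\sum_{w=0}^{\infty} \frac{\Gamma(w+r+s)}{w!}(1-q)^{w}.
\]
The remaining sum is, up to the constant $\Gamma(r+s)\,q^{-(r+s)}$, exactly $\sum_{w\ge 0} f(w; r+s, q) = 1$, the total mass of an $\mbox{NB}(r+s,q)$ variable; that is, $\sum_{w\ge 0}\frac{\Gamma(w+r+s)}{\Gamma(r+s)\,w!}(1-q)^{w} = q^{-(r+s)}$. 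Substituting this in and cancelling $q^{r}q^{-(r+s)} = q^{-s}$ gives $\fm{s}{Z} = \frac{\Gamma(r+s)}{\Gamma(r)}\left(\frac{1-q}{q}\right)^{s}$, as claimed. All interchanges of summation are justified since the terms are nonnegative. An equally short alternative is via the probability generating function $\ep u^{Z} = q^{r}\bigl(1-(1-q)u\bigr)^{-r}$, whose $s$th derivative is $q^{r}\,\frac{\Gamma(r+s)}{\Gamma(r)}(1-q)^{s}\bigl(1-(1-q)u\bigr)^{-r-s}$; evaluating at $u=1$, where $1-(1-q)=q$, yields $\fm{s}{Z} = \frac{\Gamma(r+s)}{\Gamma(r)}(1-q)^{s}q^{-s}$.

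There is no serious obstacle here: the only point requiring a little care is that $r$ need not be an integer, so one cannot use the combinatorial identity $\binom{z+r-1}{z}$ or a ``number of failures before the $r$th success'' interpretation; instead one must work throughout with the Gamma-function form, using that $\Gamma(z+r)/\Gamma(r)$ is the rising factorial $r(r+1)\cdots(r+z-1)$. Once this is kept in mind, either route reduces to a one-line computation, which is why the paper states the lemma without proof.
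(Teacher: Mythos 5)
Your computation is correct, and it is precisely the ``straightforward exercise involving the Gamma function'' that the paper alludes to but does not write out: the index shift $w=z-s$ reduces the sum to the total mass of an $\mbox{NB}(r+s,q)$ distribution, and the bookkeeping with $q^{r}q^{-(r+s)}=q^{-s}$ is right. Your remark about working with $\Gamma(z+r)/\Gamma(r)$ rather than binomial coefficients for non-integer $r$ is also the correct point of care, so nothing is missing.
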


Hence, combining Proposition \ref{prop:fmg} and Lemma \ref{lem:fmnb}, we have successfully matched the moments if
\begin{align}
\frac{r (1-q)}{q} & =  (n-k) \left( 1 - q_0 p \right)^T = \fm{1}{G}, \label{eq:matchmom1} \\
\frac{r (r+1) (1-q)^2}{q^2} & =  (n-k)(n-k-1)  \left( 1 - q_0 \left( 2p-p^2 \right) \right)^T
= \fm{2}{G},\;\;
 \end{align}
or, equivalently, if 
\begin{equation} \label{eq:paramest}
r= \frac{ \fm{1}{G}^2}{ \fm{2}{G} - \fm{1}{G}^2} \mbox{ \;\;\; and \;\;\; }
q = \frac{ \fm{1}{G}}{ \fm{2}{G} + \fm{1}{G} - \fm{1}{G}^2}. 
\end{equation}

In Figure \ref{figure2}, we illustrate the quality of this negative binomial approximation for $G$ in the case $n=500$, $k=10$ and $p=0.1$. We plot the mass function of the negative binomial random variable $Z$ (with parameter choices as in \eqref{eq:paramest}) and the corresponding estimated mass function for $G$ obtained by simulation, for several values of the number of tests $T$ in a non-adaptive algorithm. These experiments show that this negative binomial distribution gives a good approximation to the distribution of $G$ for various different values of $T$.

    \begin{figure}[!ht] 
    \centering
    \includegraphics[width=0.95\textwidth]{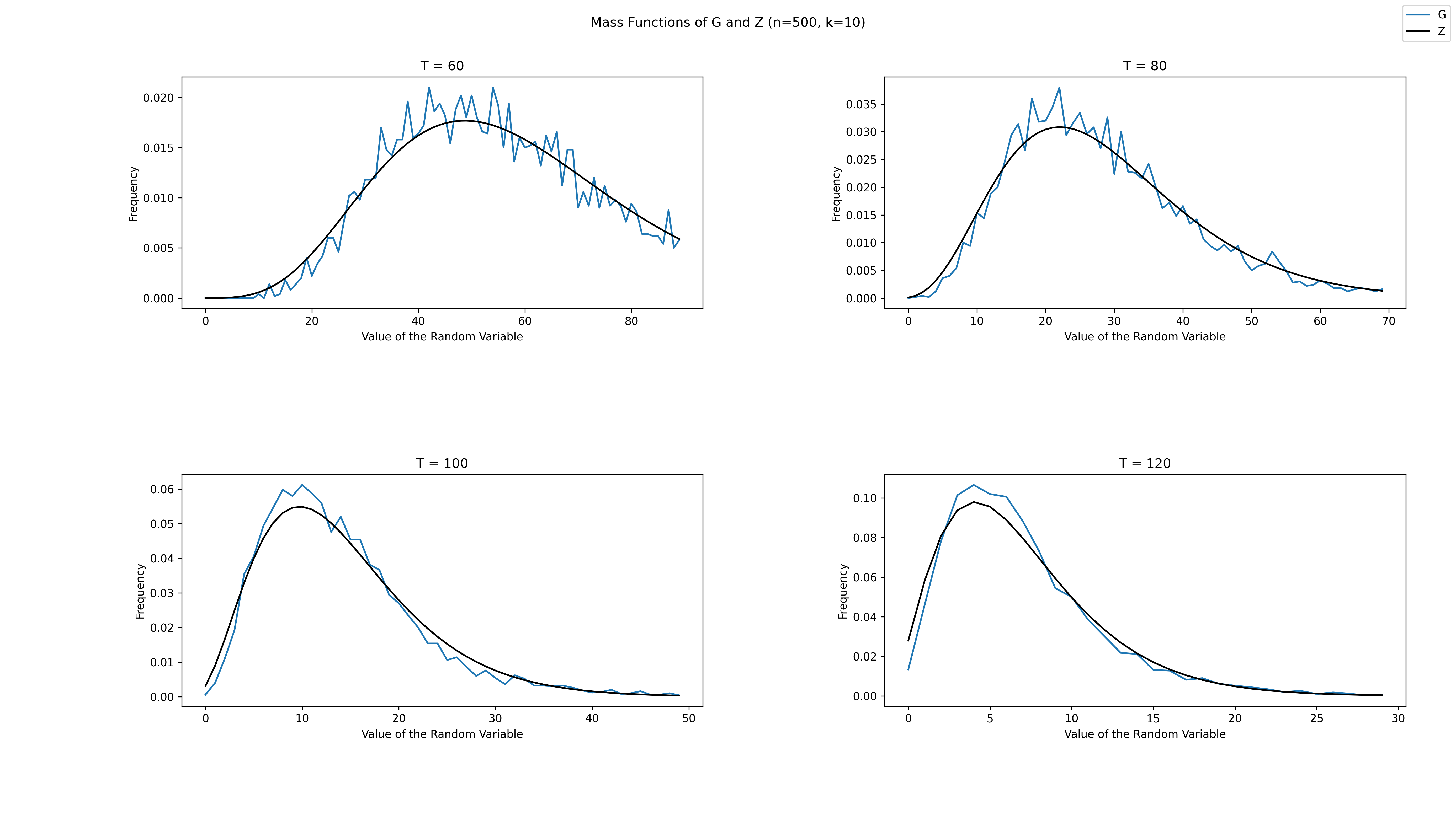} 
    \caption{Probability mass functions of $G$ (blue, obtained by simulation) and approximating negative binomial random variable $Z$ (black). In this case the group testing parameters are $n=500$, $k=10$ and $p=0.1$. The four plots correspond to $T=60,80,100,120$.} 
    \label{figure2} 
    \end{figure}

We also find that for many finite blocklength examples the extra flexibility offered by a two parameter approximation means that the negative binomial distribution with parameters given by \eqref{eq:paramest} approximates the low order falling moments of $G$ (and hence the variance, skewness and kurtosis) better than either the Poisson or geometric distributions with a single parameter chosen by matching means. This is illustrated in Table \ref{fig:momcomp}, again in the case $n=500$, $k=10$, $p=0.1$, $T=100$. In this setting the dispersion parameter $r$ of $Z$ is $3.66$, which is well separated from both $r=\infty$, corresponding to the Poisson, and $r=1$, corresponding to the geometric.

\begin{table}
    \begin{center}
\begin{tabular}{|l||c|c|c|c|c|}
\hline
$s$ & $\fm{s}{G}$ & $\fm{s}{Z}$ & $\fm{s}{Y}$ & $\fm{s}{X}$ & $\fm{s}{H}$ \\
 & (true) & (negative binomial) & (Poisson) & (geometric) & (binomial) \\
 \hline
 \hline
 1   &  14.088 & 14.088 & 14.088 & 14.088 & 14.088\\
 2   & 252.71 & 252.71  & 198.49  & 397.0 & 198.09\\
 3 & 5,716.9 & 5,505.1 & 2,796.6 & 16,779.4 & 2,779.5\\
 4 & 161,487 & 141,110 & 39,400 & 945,605 & 38,919\\
 \hline
\end{tabular}
\caption{First four falling moments of approximating distributions in the case  $n=500$, $k=10$, $p=0.1$, $T=100$. True falling moments $\fm{s}{G}$ given by \eqref{eq:fmg}. Negative binomial falling moments $\fm{s}{Z}$ given by \eqref{eq:fmnb} with parameters given by \eqref{eq:paramest}. Poisson falling moments are given by $\fm{s}{Y} = \lambda^s$, where $\lambda = \fm{1}{G}$ is chosen to match the first moment of $G$. Geometric falling moments are given by $\fm{s}{X} = s! (1/\alpha - 1)^s$ where $\alpha = 1/(1+\fm{1}{G})$ is chosen to match the first moment of $G$. The binomial random variable $H$ and its falling moments are as given in the discussion following Proposition \ref{prop:ass}.    \label{fig:momcomp}}
\end{center}   
\end{table}

In general, direct calculation shows that for any $n,p,q,T$ such that $r \geq 1$, then writing $X$, $Z$, $Y$ and $H$ for the geometric, negative binomial, Poisson and binomial, respectively, defined in Table \ref{fig:momcomp}, we have 
\begin{equation} \label{eq:momcompare}
\fm{s}{X} \geq \fm{s}{Z} \geq \fm{s}{Y} \geq \fm{s}{H}
\mbox{ for all $s \geq 1$}.\end{equation}
This follows by rewriting the expressions in \eqref{eq:momcompare} in the form
$$ s! \left( \frac{r(1-q)}{q} \right)^s
\geq \frac{\Gamma(s+r)}{\Gamma(r)} \left( \frac{(1-q)}{q} \right)^s \geq
\left( \frac{r(1-q)}{q} \right)^s \geq \left( \frac{r(1-q)}{q} \right)^s \frac{\binom{n-k}{s} s!}{(n-k)^s}
$$
which simplifies to
$$ s! r^s \geq (s+r-1)(s+r-2) \ldots r \geq r^s \geq r^s \frac{ (n-k) \ldots (n-k-s+1)}{(n-k)^s}.$$
Recall from \eqref{eq:momlowbd} above that $\fm{s}{G} \geq \fm{s}{H}$ for all $s$. However, it is not the case that $\fm{s}{G} \geq \fm{s}{Y}$ for all $s$, since $\fm{s}{G} = 0$ for $s > n-k$, whereas $\fm{s}{Y} > 0$ for all $s$, although from Table \ref{fig:momcomp} it appears that $\fm{s}{G} \geq \fm{s}{Y}$ for small $s$ at least.

In the sparse regime described above (in which $k = n^\theta$, $p=1/k$ and $T = (c/q_0) k \log(n)$), we note that the dispersion parameter $r$ of \eqref{eq:paramest} tends to infinity as $n\to\infty$. This is equivalent to the fact that $M_{(2)}(G)/M_{(1)}(G)^2$ tends to $1$. We can deduce that this holds from \eqref{eq:paramest} by writing
 \begin{align}
 \frac{M_{(2)}(G)}{M_{(1)}(G)^2} & = \frac{(n-k)(n-k-1) \left(1- q_0 (2p-p^2) \right)^T}{(n-k)^2 \left(1- q_0 p \right)^{2T}} \nonumber \\
 & \simeq  \left( 1 + \frac{p^2 q_0(1-q_0)}{(1-q_0 p)^2}
 \right)^T  \nonumber \\
 & \simeq \exp \left(  \frac{p^2 T q_0(1-q_0)}{(1-q_0 p)^2} \right) \label{eq:momratio} \\
 & \simeq \exp \left( \frac{c(1-q_0) \log(n)}{k} \right) \rightarrow 1, \nonumber
 \end{align}
 since $q_0 p^2 T = c \log(n)/k$. 
 
 Similarly, in the linear regime ($k=\beta n$, $T= c n$)
 using \eqref{eq:momratio} we obtain
 $$  \frac{M_{(2)}(G)}{M_{(1)}(G)^2} \simeq
\exp \left( \frac{ c q_0(1-q_0)}{\beta^2 n} \right) \rightarrow 1, $$
since $p^2 T = c/(\beta^2 n)$.
  A Poisson approximation to the distribution of $G$ may thus be appropriate in the large-$n$ limit for the sparse and linear regimes, but the numerical results of this section make it clear that a negative binomial approximation is a more natural choice for finite blocklength applications.

\subsection{Convergence of falling moments}\label{sec:moments}

Next, we show that, in this framework, matching the first two moments of $G$ and $Z$ ensures that all the falling moments converge.

We can see this informally by simplifying \eqref{eq:fmg} and \eqref{eq:fmnb} respectively. The former gives (to leading order)
\begin{align}
\fm{s}{G} & = \binom{n-k}{s} s! \left( 1 - q_0 \left( 1- (1-p)^s \right) \right)^T \nonumber \\
& \simeq \left( n-k \right)^s \left( 1 - q_0 ( p s ) \right)^T
\nonumber \\
& \simeq \left( n-k \right)^s \exp \left( - q_0  p s T \right)
= \left( (n-k) \exp(-q_0 p T) \right)^s, \label{eq:FMG}
\end{align}
using the fact that $1-(1-p)^s = 1 - (1-p s + O(p^2)) = ps + O(p^2)$.
The latter gives
\begin{align}
\fm{s}{Z} & = \frac{\Gamma(s+r)}{\Gamma(r)}\left(\frac{1-q}{q}\right)^{s}  \simeq  \left( \frac{r(1-q)}{q} \right)^s. \label{eq:FMZ}
\end{align}
Note that the moment matching condition of \eqref{eq:matchmom1} gives that $r(1-q)/q = (n-k) (1-q_0 p)^T \simeq (n-k) \exp(-q_0 p T)$, meaning that \eqref{eq:FMG} and \eqref{eq:FMZ} agree. A more formal comparison of falling moments is given in the following theorem, where we recall that with the usual choice $p=1/k$ we have $q_0=(1-p)^k\approx e^{-1}$:

\begin{theorem} \label{thm:momconv} Consider the number of intruding defectives $G$ and negative binomial $Z$ with parameters given by moment matching, satisfying \eqref{eq:paramest}.  Under a Bernoulli test design, for any integer $s \geq 1$, if we write $C = q_0(1-q_0)/(1-q_0 p)^2$ then the moment ratio satisfies
\begin{equation} 
\frac{\fm{s}{G}}{\fm{s}{Z}} \geq
\left( \frac{ n-k-s}{(n-k)(1 + (s-1)/(2r))} \right)^s
\left( 1 + \frac{1}{2 }s(s-1) C p^2
\left( 1 - \frac{(s-2) (1-2q_0) p}{3 (1-q_0  p)} \right) \right)^T, \label{eq:bdratioL}
\end{equation} 
and 
 \begin{equation}
\frac{\fm{s}{G}}{\fm{s}{Z}} \leq
\exp \left( s(s-1) C p^2 T (1-q_0 p)^{2-s} \right).
 \label{eq:bdratioU}
\end{equation}
\end{theorem}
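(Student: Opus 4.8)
The plan is to write the moment ratio as a product of three elementary factors and bound each separately. Combining Proposition~\ref{prop:fmg} and Lemma~\ref{lem:fmnb} with the moment‑matching relation \eqref{eq:matchmom1} (which gives $\frac{1-q}{q} = \fm{1}{G}/r = (n-k)(1-q_0p)^T/r$), and writing $(n-k)_{(s)} = (n-k)(n-k-1)\cdots(n-k-s+1)$ and $(r)^{(s)} = \Gamma(s+r)/\Gamma(r) = r(r+1)\cdots(r+s-1)$, one obtains
\begin{equation*}
\frac{\fm{s}{G}}{\fm{s}{Z}} = \underbrace{\frac{(n-k)_{(s)}}{(n-k)^s}}_{=:A} \cdot \underbrace{\frac{r^s}{(r)^{(s)}}}_{=:B} \cdot \underbrace{\left( \frac{1-q_0(1-(1-p)^s)}{(1-q_0p)^s} \right)^T}_{=\,R(s)^T},
\end{equation*}
where $R(s)$ is exactly the ratio introduced after \eqref{eq:momlowbd}. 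Note $A = \prod_{j=0}^{s-1}(1-\tfrac{j}{n-k})$ and $B = \prod_{j=1}^{s-1}\tfrac{r}{r+j}$ both lie in $(0,1]$.

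Next I would bound $A$ and $B$ from below: each factor of $A$ is at least $1-\tfrac{s-1}{n-k}$, so $A \ge \big(\tfrac{n-k-s}{n-k}\big)^s$, while applying the AM--GM inequality to $\{r,r+1,\dots,r+s-1\}$ gives $(r)^{(s)} \le (r+\tfrac{s-1}{2})^s$, hence $B \ge \big(1+\tfrac{s-1}{2r}\big)^{-s}$. Multiplying these two bounds produces precisely the first bracket of \eqref{eq:bdratioL}. Since moreover $A,B\le 1$, the upper bound \eqref{eq:bdratioU} will follow as soon as $R(s)^T$ is bounded above.

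For $R(s)$ I would use the telescoping identity coming from the recursion $R(j+1)-R(j) = pq_0(1-q_0)(1-(1-p)^j)(1-q_0p)^{-j-1}$ recorded after \eqref{eq:momlowbd}, together with $R(1)=1$ and $pq_0(1-q_0) = Cp(1-q_0p)^2$, to get $R(s) = 1 + Cp\sum_{j=1}^{s-1}(1-(1-p)^j)(1-q_0p)^{1-j}$. For the upper bound, use $1-(1-p)^j\le jp$, $(1-q_0p)^{1-j}\le(1-q_0p)^{2-s}$ for $1\le j\le s-1$, and $\sum_{j=1}^{s-1}j=\tfrac{s(s-1)}{2}$ to get $R(s)\le 1+\tfrac12 s(s-1)Cp^2(1-q_0p)^{2-s}$, whence $R(s)^T \le \exp\big(\tfrac12 s(s-1)Cp^2(1-q_0p)^{2-s}T\big)$, which is stronger than \eqref{eq:bdratioU}. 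For the lower bound I would establish the per‑term estimate
\begin{equation*}
(1-(1-p)^j)(1-q_0p)^{1-j} \ge pj\left(1 - \frac{(j-1)(1-2q_0)p}{2(1-q_0p)}\right), \qquad j\ge 1,
\end{equation*}
and then sum, using $\sum_{j=1}^{s-1}j=\tfrac{s(s-1)}{2}$ and $\sum_{j=1}^{s-1}j(j-1)=\tfrac{s(s-1)(s-2)}{3}$, to obtain $R(s)\ge 1+\tfrac12 s(s-1)Cp^2\big(1-\tfrac{(s-2)(1-2q_0)p}{3(1-q_0p)}\big)$; raising to the $T$th power and multiplying by the lower bounds on $A$ and $B$ yields \eqref{eq:bdratioL}.

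The main obstacle is the per‑term inequality above. Multiplying through by $(1-q_0p)^{j-1}$ turns it into a polynomial inequality in $p$; one checks that it holds with equality for $j=1,2$, that the coefficients of $p$ and $p^2$ agree on the two sides for every $j$, and that the first genuine constraint is at order $p^3$, where it reduces to $3q_0(1-q_0)\le 1$ — automatic since $q_0(1-q_0)\le\tfrac14$. Making this rigorous for all $j$ uniformly (controlling the higher‑order terms, e.g.\ by induction on $j$ or by writing $1-(1-p)^j=p\sum_{i=0}^{j-1}(1-p)^i$ and comparing term by term) is the delicate point; the rest of the argument is routine.
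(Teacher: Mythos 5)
Your decomposition of the ratio into the two factorial factors $A$, $B$ and the power term $R(s)^T$, and your treatment of $A$ and $B$ (termwise comparison from above, AM--GM from below), coincide exactly with the paper's proof. Your handling of the power term is genuinely different. The paper writes $R(s) = (1-q_0)(1+x_1)^s + q_0(1+x_2)^s$ with $x_1 = pq_0/(1-q_0p)$ and $x_2 = -p(1-q_0)/(1-q_0p)$, and applies the single third-order estimate $(1+x)^s \geq 1 + sx + \tfrac12 s(s-1)x^2 + \tfrac16 s(s-1)(s-2)x^3$ (proved by showing the second derivative of the difference is nonnegative via Bernoulli's inequality) at the two points $x_1,x_2$; since $(1-q_0)x_1 + q_0x_2 = 0$ the linear terms cancel and the quadratic and cubic terms deliver the bracket in \eqref{eq:bdratioL} in one step. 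Your telescoping route gives a clean and fully rigorous proof of the upper bound \eqref{eq:bdratioU} --- indeed with the improved constant $\tfrac12 s(s-1)$ in the exponent --- but for the lower bound it leaves a genuine gap.

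The gap is the per-term inequality
\begin{equation*}
(1-(1-p)^j)(1-q_0p)^{1-j} \geq pj\left(1 - \frac{(j-1)(1-2q_0)p}{2(1-q_0p)}\right),
\end{equation*}
which is exactly the step that produces the factor $\bigl(1 - \tfrac{(s-2)(1-2q_0)p}{3(1-q_0p)}\bigr)$ after summation. Clearing denominators turns this into a polynomial inequality in $p$ of degree growing with $j$, and checking that the coefficients of $p$ and $p^2$ match and that the $p^3$ coefficient has the right sign (via $3q_0(1-q_0)\leq 1$) only establishes the inequality for $p$ in a $j$-dependent neighbourhood of $0$, not for all relevant $p$; you explicitly defer the uniform control of the higher-order terms, so the lower bound \eqref{eq:bdratioL} is not proved as written. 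The simplest repair is to abandon the term-by-term estimate and bound the whole expression at once as the paper does: because the linear contributions cancel identically in the mixture $(1-q_0)(\cdot)+q_0(\cdot)$, no per-$j$ inequality is needed, only the one cubic Taylor bound applied at $x_1$ and $x_2$.
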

\begin{proof} See Appendix \ref{sec:momconv}. \end{proof}

Hence, for any fixed $s$ the ratio $\fm{s}{G}/\fm{s}{Z} \rightarrow 1$, for both
\begin{enumerate}
\item the sparse regime with $k = n^{\theta}$ for some $\theta \in (0,1)$ and $T= c e k \log n$, and
\item the linear regime with $k = \beta n$ and $T = c n$. 
\end{enumerate}
Here we control the upper bound in \eqref{eq:bdratioU}
using the fact that (see Section \ref{sec:nbapprox} above)
the $p^2 T$ is $c \log n/(q_0 k)$ or $c/(\beta^2 n)$ respectively. Similarly, we control the lower bound \eqref{eq:bdratioL} using the fact that in both regimes the $n-k$ and dispersion parameter $r$ tend to infinity (again see Section \ref{sec:nbapprox}).

Additionally, the bounds \eqref{eq:bdratioL} and \eqref{eq:bdratioU} allow us to control the ratio $\fm{s}{G}/\fm{s}{Z}$ in the finite blocklength regime, deducing bounds that can be compared with the concrete values given in Table \ref{fig:momcomp} for example. 

\subsection{Stein--Chen method} \label{sec:scmethod}

Having seen in Sections \ref{sec:nbapprox} and \ref{sec:moments} that a negative binomial distribution seems to be a reasonable approximation for the distribution of $G$, in this section we adapt the Stein--Chen method to prove explicit error bounds in the approximation of $G$ by a negative binomial distribution. We emphasise that these bounds apply for any finite blocklength application. 
The error in our approximation of $G$ by $Z$ will be measured in total variation distance, defined by
\begin{equation} \label{eq:dtv}
d_{TV}(G,Z)=\sup_{A\subseteq\ZZ^+}|\pr(G\in A)-\pr(Z\in A)|=\inf_{(G,Z)}\pr(G\not=Z)\,,
\end{equation}
where $\ZZ^+=\{0,1,\ldots\}$ and the infimum is taken over all couplings of $G$ and $Z$.

First, we briefly review the Stein--Chen method in the context of negative binomial approximation. For a more detailed introduction to this technique more generally, we refer the reader to \cite{RossSurvey}.
Recall from \cite{brown3} that $Z \sim \mbox{NB}(r,q)$ if and only if 
\begin{equation} \label{eq:steinchar}
 \ep \left[ (1-q) (r+Z) g(Z+1) - Z g(Z) \right] = 0,\end{equation}
for all test functions $g:\mathbb{Z}^+\to\mathbb{R}$ for which the expectation exists. This follows as a consequence of the fact that the negative binomial probability mass function of \eqref{eq:negbinpar}
 satisfies $z \pr(Z = z) = (1-q) (r+z-1) \pr(Z = z-1)$.


The key to the analysis is that for each set $A \subseteq \ZZ^+$ we can define a function $f_A:\mathbb{Z}^+\to\mathbb{R}$ which satisfies $f_A(0)=0$ and the Stein--Chen equation
\begin{equation} \label{eq:steinchen}
(1-q)(r+z) f_A(z+1) - z f_A(z) = \II(z \in A) - \pr(Z \in A),
\end{equation}
for all $z\in\mathbb{Z}^+$. Then for any random variable $Y$ we can take the expectation of \eqref{eq:steinchen} over $Y$ to obtain
\begin{equation} \label{eq:steinchen2}
\ep \left( (1-q)(r+Y) f_A(Y+1) - Y f_A(Y) \right) = \pr(Y \in A) - \pr(Z \in A).
\end{equation}
Note that (as expected) if $Y$ were negative binomial then both RHS
and LHS of \eqref{eq:steinchen2} would be zero (the latter due to the characterization in \eqref{eq:steinchar}). However, we can deduce that if the LHS of \eqref{eq:steinchen2} is small, then so is the RHS. Indeed, if the LHS of \eqref{eq:steinchen2} is small uniformly over choices of $A$ then we can deduce a bound in total variation distance since, combining \eqref{eq:dtv} and \eqref{eq:steinchen}, we have
\begin{equation}
d_{TV}(Y, W) = \sup_{A \subseteq\ZZ^+}  \left| \ep \left( (1-q)(r+Y) f_A(Y+1) - Y f_A(Y) \right) \right|. \label{eq:steinchen3}
\end{equation}

Having reviewed the Stein--Chen method in general, we will now 
describe how we bound \eqref{eq:steinchen2} in this specific case.
For our approximating negative binomial distribution for $G$, we will make the following choices of the parameters $q$ and $r$:
\begin{equation}\label{eq:NBparams}
q=\frac{\mu}{\sigma^2}\,,\qquad\mbox{ and }\qquad r=\frac{\mu^2}{\sigma^2-\mu}=\frac{1}{e^{Tp^2 q_0}-1}\,,
\end{equation}
where 
\[
\mu=(n-k)e^{-Tp q_0}\,,\qquad\mbox{ and }\qquad\sigma^2=(n-k)^2\left(e^{-Tp(2-p)q_0}-e^{-2Tp q_0} \right)+(n-k)e^{-Tp q_0},
\]
and where, as before, we write $q_0 = (1-p)^k$.

We remark that these parameter choices \emph{do not} match the first two moments of $Z$ with those of $G$, unlike those of \eqref{eq:paramest}. Instead, the parameters $q$ and $r$ are chosen to match the first two moments of $Z$ with those of $G^{\prime\prime}$, to be defined precisely below, in which the binomial mixture which defines $G$ is replaced by a particular Poisson mixture. This may seem a little unnatural at first, but makes sense in the setting of the proof in Appendix \ref{app:scproof} below, and the ultimate effect should be negligible since the distributions of $G$ and $G^{\prime\prime}$ are close, as our proof demonstrates.

Our main result is the following
\begin{theorem}\label{thm:approx}
Let $G$ be as above, and let $Z$ have a negative binomial distribution with parameters $q$ and $r$ given by (\ref{eq:NBparams}). Then, defining $K = e^{T p q_0}$, we have
\begin{multline}\label{eq:steinthm}
d_{TV}(G,Z)\leq 
2\min\left\{\frac{q_0}{4\sqrt{1-q_0}},\frac{1}{\sqrt{T}}\alpha(q_0)+\frac{1}{\sqrt{2\pi e}}\log\left(\frac{1}{\sqrt{1-q_0}}\right)\right\}+e^{-Tpq_0}
\\
+\frac{(2-q)(n-k)}{1-q}
\Bigg(e^{r+1} K^r \exp(-K r)
+\int_0^1\left|\widehat{\Gamma}\left(\left\lceil\frac{\log(x)}{\log(1-p)}\right\rceil,Tq_0 \right)-\widehat{\Gamma}\left(r,K r x\right)\right|\,dx\Bigg)\,,
\end{multline}
where
\begin{align*}
\alpha(q_0)&=\frac{0.4748\left[\sqrt{1-q_0}\left(1+2q _0^2 e^{-q_0}\right)+ q_0^2+(1-q_0)^2\right]}{\sqrt{q_0(1-q_0)}}\,,
\end{align*}
and $\widehat{\Gamma}(\cdot,\cdot)$ is the normalised upper incomplete gamma function, defined by 
\[
\widehat{\Gamma}(s,y)= \frac{1}{\Gamma(s)} \int_y^\infty t^{-s-1}e^{-t}\,dt\,,
\] 
where $\Gamma(\cdot)$ is the gamma function.
\end{theorem}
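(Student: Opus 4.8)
The plan is to bound $d_{TV}(G,Z)$ by comparing the binomial-mixture-of-binomials representation \eqref{eq:M0def}--\eqref{eq:GM0def} of $G$ with the Gamma--Poisson mixture representation of the negative binomial: $Z\sim\mathrm{NB}(r,q)$ can be written as $Z\mid\Gamma\sim\po(\Gamma)$, where $\Gamma$ has a Gamma distribution with shape $r$ and mean $\mu=(n-k)e^{-Tpq_0}$, which is the continuous analogue of the atomic mixing structure $G\mid M_0\sim\bin(n-k,(1-p)^{M_0})$ with $M_0\sim\bin(T,q_0)$. I would interpolate through two intermediate variables and use the triangle inequality $d_{TV}(G,Z)\le d_{TV}(G,G')+d_{TV}(G',G'')+d_{TV}(G'',Z)$: first $G'$, in which the inner binomial $\bin(n-k,(1-p)^{M_0})$ is replaced by the Poisson $\po((n-k)(1-p)^{M_0})$ of the same conditional mean; then $G''$, a genuine Poisson mixture in which the binomially distributed $M_0$ is replaced by a suitable approximation, so that the mixing law becomes directly comparable with that of $\Gamma$. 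The parameter choices \eqref{eq:NBparams} are precisely the ones making $Z$ and $G''$ agree in their first two moments, which removes any residual mean/variance mismatch and leaves the discrepancy of the two mixing laws as the only error in the final step.

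For $d_{TV}(G,G')$ I would condition on $M_0$ and use the classical Poisson-approximation estimate $d_{TV}(\bin(N,\pi),\po(N\pi))\le\pi$, then average: $d_{TV}(G,G')\le\ep[(1-p)^{M_0}]=(1-q_0p)^T\le e^{-Tpq_0}$, which is exactly the isolated term in \eqref{eq:steinthm}. For the step replacing the binomial $M_0$ one \emph{cannot} simply Poissonise: since $q_0\approx e^{-1}$ is not small, the Le Cam bound $d_{TV}(\bin(T,q_0),\po(Tq_0))\le q_0$ is useless. Instead I would approximate $M_0$ (or the relevant smooth functional $(1-p)^{M_0}$ of it) by a Gaussian, which produces the term $2\min\{\cdots\}$, with the Berry--Esseen constant $0.4748$ inside $\alpha(q_0)$ and the $\tfrac{1}{\sqrt{2\pi e}}\log(1/\sqrt{1-q_0})$ piece arising from controlling the exponential functional $(1-p)^{M_0}$ against its Gaussian surrogate.

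The heart of the argument is the estimate of $d_{TV}(G'',Z)$ by the Stein--Chen method. Using \eqref{eq:steinchar}--\eqref{eq:steinchen3} I would write $d_{TV}(G'',Z)=\sup_A\bigl|\ep\bigl[(1-q)(r+G'')f_A(G''+1)-G''f_A(G'')\bigr]\bigr|$, condition on the mixing variable $\Theta$ of $G''$ (so $G''\mid\Theta\sim\po(\Theta)$), and apply the Poisson Stein identity $\ep[G''f_A(G'')\mid\Theta]=\ep[\Theta f_A(G''+1)\mid\Theta]$. This collapses the Stein expression to $\ep\bigl[((1-q)(r+G'')-\Theta)f_A(G''+1)\bigr]$, which, after replacing $\Theta$ by $\Gamma$ and tracking the difference, reduces the whole thing to comparing $\ep h(\Theta)$ with $\ep h(\Gamma)$ for functions $h$ built from $f_A$. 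Bounding the Stein solution $f_A$ of \eqref{eq:steinchen} by the standard negative binomial estimates (which, together with the Jacobian $(n-k)$ from rescaling the mixing variable onto the unit interval via $\Theta/(n-k)=(1-p)^{M_0}\le1$, yields the prefactor $\tfrac{(2-q)(n-k)}{1-q}$) and then integrating by parts turns the comparison into $\int_0^1\bigl|F_{\Theta/(n-k)}(x)-F_{\Gamma/(n-k)}(x)\bigr|\,dx$. Here $F_{\Gamma/(n-k)}(x)=\widehat{\Gamma}(r,Krx)$ from the Gamma CDF, while $F_{\Theta/(n-k)}(x)=\pr\bigl(M_0\ge\lceil\log(x)/\log(1-p)\rceil\bigr)$ is a tail of the (approximated) law of $M_0$, written via $\widehat{\Gamma}(\lceil\log(x)/\log(1-p)\rceil,Tq_0)$; the remaining term $e^{r+1}K^re^{-Kr}$ accounts for the Gamma mass beyond the range $[0,n-k]$ of $\Theta$, controlled by a Chernoff-type bound $\pr(\Gamma>n-k)=\pr(\mathrm{Gamma}(r,1)>Kr)\le e^rK^re^{-Kr}$.

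The main obstacle I anticipate is this last step: making the conditional Poisson/Gamma Stein manipulation close cleanly — so that the only surviving quantity is the mixing-law discrepancy rather than a residual recursion in $f_A$ — and then controlling that discrepancy in an explicit, small form. The difficulty is that $\Theta$ is a deterministic function of an integer-valued variable, hence purely atomic, so its total variation distance to the continuous $\Gamma$ is essentially $1$ and a naive data-processing argument fails; it is only because the Stein solution $f_A$ has bounded first difference that functionals of $\Theta$ can be compared with those of $\Gamma$ through the $L^1$ distance of their distribution functions, and getting the incomplete-gamma bookkeeping to mesh with the Berry--Esseen input while keeping the constants explicit is where the real work lies.
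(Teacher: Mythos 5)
Your architecture is essentially the paper's: two intermediate mixed distributions, the conditional binomial-to-Poisson bound producing the isolated $e^{-Tpq_0}$ term, the observation that \eqref{eq:NBparams} matches the first two moments of $Z$ to those of the fully Poissonised mixture $G''$ rather than of $G$ itself, and a Stein--Chen argument that converts $d_{TV}(G'',Z)$ into an $L^1$ comparison of the mixing distributions with prefactor $\frac{(2-q)(n-k)}{1-q}$, a Chernoff-controlled gamma tail, and the incomplete-gamma integral. That final reduction is exactly the content of the paper's Lemma on transferring negative binomial approximation of a mixed Poisson to gamma approximation of the mixing law, and you correctly identify why it is the delicate step.

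The genuine gap is your treatment of the mixing variable $M_0$. You assert that one ``cannot simply Poissonise'' because $q_0\approx e^{-1}$ is not small, and propose instead a Gaussian surrogate for $M_0$ (or for $(1-p)^{M_0}$). This is both unworkable as stated and inconsistent with the bound you are trying to prove. It is unworkable because the step is cheap only via data processing, $d_{TV}(G,G^\prime)\leq d_{TV}(M_0,M^\prime)$, which requires the replacement law to live on $\ZZ^+$; a total variation comparison of the integer-valued $M_0$ with a continuous Gaussian is vacuous, and at this stage you have no Stein solution with bounded differences to convert a Wasserstein-type Gaussian comparison into a total variation bound on the resulting mixtures. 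It is inconsistent because the integrand in \eqref{eq:steinthm} contains $\widehat{\Gamma}\left(\left\lceil\log(x)/\log(1-p)\right\rceil,Tq_0\right)$, which for integer first argument is precisely the distribution function of a $\po(Tq_0)$ variable: the final integral presupposes that the mixing count has been replaced by $M^\prime\sim\po(Tq_0)$. The paper does Poissonise $M_0$; what you miss is that Le Cam's bound $q_0$ is not the only tool available, and the entire first term $2\min\{\cdots\}$ of \eqref{eq:steinthm} is Weba's bound on $d_{TV}(\bin(T,q_0),\po(Tq_0))$ for medium-sized success probabilities, with Shevtsova's Berry--Esseen constant $0.4748$ entering through Weba's proof (which is where your Gaussian intuition actually lives). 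Two minor imprecisions: your first step gives $\ep[(1-p)^{M_0}]=(1-q_0p)^T$ whereas the paper's ordering gives $\ep[(1-p)^{M^\prime}]=e^{-Tpq_0}$ exactly (harmless, since the former is smaller); and the term $e^{r+1}K^re^{-Kr}$ bounds the integrated tail $\int_1^\infty\pr(\eta^\prime>x)\,dx$ of the Wasserstein integral, not the single probability $\pr(\eta^\prime>1)$, which is why the exponent is $r+1$ rather than $r$.
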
 
\begin{proof} See Appendix \ref{app:scproof} below. \end{proof}

We conclude this section with some discussion and numerical illustration of the bound of Theorem \ref{thm:approx}. We will discuss further aspects of the convergence of this bound in Remark \ref{rem:gammbd} below, but first use numerical illustrations to gain some initial understanding of its behaviour. Firstly, we note that although our bound applies for any finite blocklength, there are examples in which it is worse than the trivial bound $d_{TV}(G,Z)\leq1$, or performs poorly compared to the simulation results we observed in Section \ref{sec:nbapprox}. For example, with $n=500$, $k=10$, $p=0.1$ and $T=100$, the bound of Theorem \ref{thm:approx} gives $d_{TV}(G,Z)\leq1.80$, which is clearly uninformative. We give some further examples of our upper bound in Table \ref{tab:stein}, all in the case $n=2500$ and for various values of $k$, $p$ and $T$, from which it is clear that there are some cases in which the bound of Theorem \ref{thm:approx} performs very well, and demonstrates proximity of the distribution of $G$ to negative binomial.

\begin{table}
    \centering
    \begin{tabular}{|l|c|c|c|c|c|c|}
        \hline
         & \multicolumn{3}{c|}{$T=500$} & \multicolumn{3}{c|}{$T=1000$} \\
         \cline{2-7}
         & $p=0.05$ & $p=0.1$ & $p=0.2$ & $p=0.05$ & $p=0.1$ & $p=0.2$ \\
         \hline
         $k=5$ & 0.501 & 0.337 & 0.120 & 0.460 & -- & -- \\
         \hline
         $k=10$ & 0.342 & 0.216 & 0.066 & 0.307 & 0.198 & 0.057 \\
         \hline
         $k=20$ & 0.234 & -- & -- & 0.200 & 0.065 & --\\
         \hline
    \end{tabular}
    \caption{The upper bound of Theorem \ref{thm:approx} in the case $n=2500$ for various values of $k$, $p$ and $T$. The symbol ``--'' indicates that the upper bound is larger than 1, and therefore uninformative.}
    \label{tab:stein}
\end{table}

It is interesting to note that in many cases the largest share of the error estimate in Theorem \ref{thm:approx} comes from the first term of the upper bound, which arises from the approximation of the binomial random variable $M_0$ by a Poisson random variable of the same mean (see the proof in Appendix \ref{app:scproof} for details). Nevertheless, the bound we would obtain without making this approximation generally performs worse than our Theorem \ref{thm:approx}. We conjecture that this is because the integral in the final term of the upper bound is made smaller by this approximation of $M_0$ (compared to the corresponding term without this approximation), resulting in a smaller upper bound overall because of the relatively large factors multiplying this integral.

\begin{remark} \label{rem:gammbd}
While the incomplete gamma functions in \eqref{eq:steinthm} make the integral in that bound not straightforward to interpret directly, we can provide an upper bound on this quantity using concentration of measure inequalities.

We split the region of integration in three parts, with breaks at $(1 \pm \epsilon)/K$. On the region $\left( (1-\epsilon)/K, (1+\epsilon)/K \right)$, we simply bound the integrand by 1, to give $2 \epsilon/K$. Using the fact that for $0 \leq u,v \leq 1$ we can bound $|u-v| \leq \max(u,v)$ and $|u-v| \leq \max(1-u,1-v)$, we can hence bound the integral in \eqref{eq:steinthm} by
\begin{multline*}
\frac{2 \epsilon}{K} + \frac{1}{K} \max \left( \pr \left( \xi^\prime < \frac{(1-\epsilon)}{K} \right), \pr \left(\eta^\prime < \frac{(1-\epsilon)}{K} \right) \right)\\
+ \max \left( \pr \left( \xi^\prime > \frac{(1+\epsilon)}{K} \right), \pr \left(\eta^\prime > \frac{(1+\epsilon)}{K} \right) \right)\,. 
\end{multline*}

Since $\eta' \sim \Gamma(r,rK)$, the inequalities \eqref{eq:chernoffbd} and \eqref{eq:chernoffbd2} below give us that we may upper bound both $\pr(\eta'>z)$ and $\pr(\eta'<z)$ by 
$$ (K z e)^r \exp(-r K z) = \exp
\left( r (1 - K z + \log(K z)) \right).$$
Hence, for example,  we know by writing
$m(s) = s - \log(1+s) \simeq s^2/2$ that
$$ \pr \left(\eta^\prime > \frac{(1+\epsilon)}{K} \right) \leq
\exp \left( - r m(\epsilon) \right) \mbox{\;\; and \;\;} \pr \left(\eta^\prime < \frac{(1-\epsilon)}{K} \right) \leq
\exp \left( -r m(-\epsilon) \right)\,.$$

A similar standard Chernoff bounding argument, as used for \eqref{eq:chernoffbd} and \eqref{eq:chernoffbd2}, gives that, for $Y \sim \po(\lambda)$, both $\pr( Y > y)$ and $\pr(Y < y)$ are bounded above by $\exp \left( - \lambda h(y/\lambda - 1) \right)$, where $h(s) = (1+s) \log(1+s) - s
\simeq s^2/2$. Hence, for example, we can bound
\begin{align*} 
\pr( \xi^\prime >x) &= \pr \left( M' < \frac{-\log x}{ -\log(1-p)} \right) \leq 
 \pr \left( M' < \frac{-\log x}{ p} \right)\\
&\leq \exp \left(- T q_0 h \left(  \frac{-\log x}{T q_0 p} - 1 \right) \right)\,,
\end{align*} 
if $-\log x/p \leq \ep M' = T q_0$, since $M' \sim \po(T q_0)$. Hence, taking $x= (1+\epsilon)/K = (1+\epsilon) e^{-T p q_0}$ we obtain
$$ \pr \left( \xi^\prime > \frac{(1+\epsilon)}{K} \right)
\leq  \exp \left(- T q_0 h \left( - \frac{\log (1+\epsilon)}{T q_0 p} \right) \right)\,.$$ 
Similarly, we can bound
$$ \pr( \xi^\prime < x) = \pr \left( M' > \frac{-\log x}{ -\log(1-p)} \right) \leq \exp \left(- T q_0 h \left(  \frac{-\log x}{-T q_0 \log(1-p)} - 1 \right) \right)\,,
$$ 
if $-\log x/-\log(1-p) \leq \ep M' = T q_0$, since $M' \sim \po(T q_0)$. Hence, taking $x= (1-\epsilon)/K = (1-\epsilon) e^{-T p q_0}$ and writing $m(-p) = - p - \log(1-p) \geq 0$ as above, we obtain that
$$ \pr \left( \xi^\prime < \frac{(1-\epsilon)}{K} \right) \leq  \exp \left( - T q_0 h \left( - \frac{-\log(1-\epsilon) - T q_0 m(-p)}{-T q_0 \log(1-p)} \right) \right)\,,$$
assuming that the numerator is positive, which holds, for example, if $\epsilon > T q_0 m(-p)$. This condition is satisfied, for example, for large $n$ in the linear regime where $T=c n$, $k = \beta n$ and $p = 1/k$, since $m(p) \simeq p^2/2$ so $T q_0 m(-p)
\sim {\rm const.}/n$ in this regime.
\end{remark}

\section{Implications for group testing algorithms} \label{sec:implications}

As discussed previously, understanding the distribution of $G$ allows us to control the performance of the conservative two-stage algorithm of Aldridge \cite{aldridge6}. Specifically,  we consider the scenario where $T_1$ tests are performed in the first stage according to a Bernoulli testing design with parameter $1/k$, and then $T_2$ individual tests are performed afterwards to resolve the status of items we are unsure about. 

Given a fixed total budget of $T$ tests, we can regard the standard Bernoulli test design as corresponding to $T_1 = T$ and $T_2 = 0$ (no second stage), and individual testing as corresponding to $T_1 =0$ and $T_2 = T$ (no first stage). However, it is natural to consider strategies intermediate to these, to see if better performance can be obtained by choosing some $T$ satisfying $0 < T_1 \leq T$.

Given an overall budget of $T$ tests, this leaves us $T_2 = T-T_1$ individual tests to find the status of
$k+G$ items, and we will succeed if $T_2 \geq k+G$ or, equivalently, if $G  + T_1 + k \leq T$. Equivalently, the algorithm will fail if $G > T - T_1-k$.

Aldridge \cite[Theorem 1]{aldridge6} considers this failure event in terms of expected values. That is, we may wish to choose $T_1$ to minimise 
\begin{align}
 \ep(T_1 + k + G) & =  T_1 + k + \fm{1}{G} =
T_1 + k + (n-k) (1-q_0 p)^{T_1} \nonumber \\
& \simeq T_1 + k + (n-k) \exp \left( - \frac{1}{e k} T_1 \right),
\end{align}
and as in \cite{aldridge6} direct calculation gives that the optimal choice of $T_1$ to control this expectation is 
\begin{equation} \label{eq:topt}
 T_1^* = k e \log \left( \frac{n-k}{ke} \right).
 \end{equation}
Interestingly, since $p=1/k$ and $q_0 \simeq 1/e$, this choice makes the expected value 
$$ \fm{1}{G} = (n-k)(1-q_0 p)^{T_1} 
\simeq (n-k) \exp \left( - \frac{T_1}{e k} \right) = k,$$
meaning that the average number of intruding non-defectives approximately equals the number of true defectives, so a randomly chosen individual test is positive with probability close to $1/2$, maximising the information that we gain from it.

However, instead of simply finding the expected value we can use the moment values calculated in this paper to bound the error probability under this kind of two-stage strategy. For example,  Chebyshev's inequality gives an upper bound on the error probability:
\begin{lemma} If we use $T_1 = k c_1$ tests in the first stage and $T_2 = k c_2$ tests in the second stage
\begin{equation} \label{eq:chebya}
\pr(\err) \leq \min \left( 1, \frac{ \fm{2}{G}/\fm{1}{G}^2 - 1  + 1/\fm{1}{G}}{ \left(n \beta(c_2 - 1)/\fm{1}{G} - 1 \right)^2}
\right). \end{equation} 
\end{lemma}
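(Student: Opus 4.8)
The plan is to rewrite the failure event of the two-stage scheme as a one-sided large-deviation event for $G$, and then apply Chebyshev's inequality using the closed-form falling moments already obtained. First I would recall the characterisation stated just before the lemma: the procedure fails precisely when $G > T - T_1 - k = T_2 - k$. Substituting $T_1 = kc_1$, $T_2 = kc_2$, and (in the linear regime) $k = \beta n$, this failure event is $\{G > k(c_2-1)\} = \{G > n\beta(c_2-1)\}$, so it suffices to bound $\pr\big(G > n\beta(c_2-1)\big)$.

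Next I would express $\var(G)$ through falling moments. Since $G$ is integer-valued, $\ep G^2 = \fm{2}{G} + \fm{1}{G}$ (the identity already used in \eqref{eq:var1}), so $\var(G) = \fm{2}{G} + \fm{1}{G} - \fm{1}{G}^2$; closed forms for both $\fm{1}{G}$ and $\fm{2}{G}$ come from Proposition \ref{prop:fmg}, and in particular $\ep G = \fm{1}{G} > 0$. Assuming $n\beta(c_2-1) > \fm{1}{G}$ (the regime in which the bound carries content), Chebyshev's inequality gives $\pr\big(G > n\beta(c_2-1)\big) \leq \pr\big(|G - \ep G| \geq n\beta(c_2-1) - \fm{1}{G}\big) \leq \var(G)\big/\big(n\beta(c_2-1) - \fm{1}{G}\big)^2$. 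Dividing numerator and denominator by $\fm{1}{G}^2$ turns the right-hand side into exactly $\big(\fm{2}{G}/\fm{1}{G}^2 - 1 + 1/\fm{1}{G}\big)\big/\big(n\beta(c_2-1)/\fm{1}{G} - 1\big)^2$, and taking the minimum with the trivial bound $1$ yields \eqref{eq:chebya}.

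The only point needing a little care — and it is a technicality rather than a real obstacle — is that Chebyshev is being applied to a one-sided event, which is legitimate only because the threshold $n\beta(c_2-1)$ exceeds the mean $\fm{1}{G}$; when it does not, the displayed fraction need not dominate $\pr(\err)$, but that is precisely the uninformative regime in which one falls back on $\pr(\err) \leq 1$, which is why the statement is written with the minimum against $1$. (If a sharper constant were wanted, one could replace Chebyshev by the one-sided Cantelli inequality, at the cost of a slightly more complicated expression; I would not pursue this here.)
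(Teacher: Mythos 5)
Your proposal is correct and follows essentially the same route as the paper: identify the failure event as $\{G > T_2 - k\}$, apply Chebyshev's inequality about the mean, and express $\var(G)$ via the falling moments of Proposition \ref{prop:fmg}. Your extra remark about the one-sided application requiring the threshold to exceed the mean (handled by the minimum with $1$) is a point the paper leaves implicit, but it does not change the argument.
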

\begin{proof} A standard argument gives
\begin{eqnarray}
\pr(\err) & = & \pr( G  > T_2 - k) = \pr( G - \ep G> T_2 -k - \ep G) \nonumber \\
& \leq & \frac{\var(G)}{(T_2 - k - \ep G)^2}
= \frac{ \fm{2}{G}  - \fm{1}{G}^2 + \fm{1}{G}}{ \left(n \beta(c_2 - 1) - \fm{1}{G} \right)^2}, \nonumber
\end{eqnarray}
and the result follows.
\end{proof}

Note that the expression \eqref{eq:chebya} becomes
\begin{equation}
 \simeq  \frac{ \exp(c_1 q_0 p) - 1 + \exp(c_1 q_0)/n}{ \left( \beta(c_2-1) \exp(c_1 q_0)/(1-\beta) - 1 \right)^2} \label{eq:cheby}
\end{equation}
in the linear regime ($k=\beta n$), 
since $\fm{1}{G} = (n-k) (1-q_0 p)^{T_1} \simeq n(1-\beta) \exp(-c_1 q_0)$ and $\fm{2}{G} = (n-k)(n-k-1)(1-q_0 (2p-p^2))^T \simeq n^2(1-\beta)^2 \exp(-c_1 q_0 (2-p))$.

Hence if we take $c_1  = e \log( (1-\beta)/(\beta e))$ as suggested by \eqref{eq:topt}, so that $\exp(c_1 q_0) \simeq (1-\beta)/\beta$, then \eqref{eq:cheby} becomes
$$
\pr(\err) \leq \frac{ \exp(2 c_1 q_0 p) - 1 + (1-\beta)/(n \beta)}{ (c_2-2)^2},
$$
so for any fixed $c_2 > 2$ the error probability tends to zero at rate $1/n$ as $n \rightarrow \infty$.

Further, using the negative binomial approximation of this paper we can find large deviations bounds on the success probability of the two-stage algorithm. Figure \ref{figure3} shows that in this case this negative binomial approximation provides accurate bounds on the total number of tests needed.

    \begin{figure}[!ht] 
    \centering
    \includegraphics[width=0.7\textwidth]{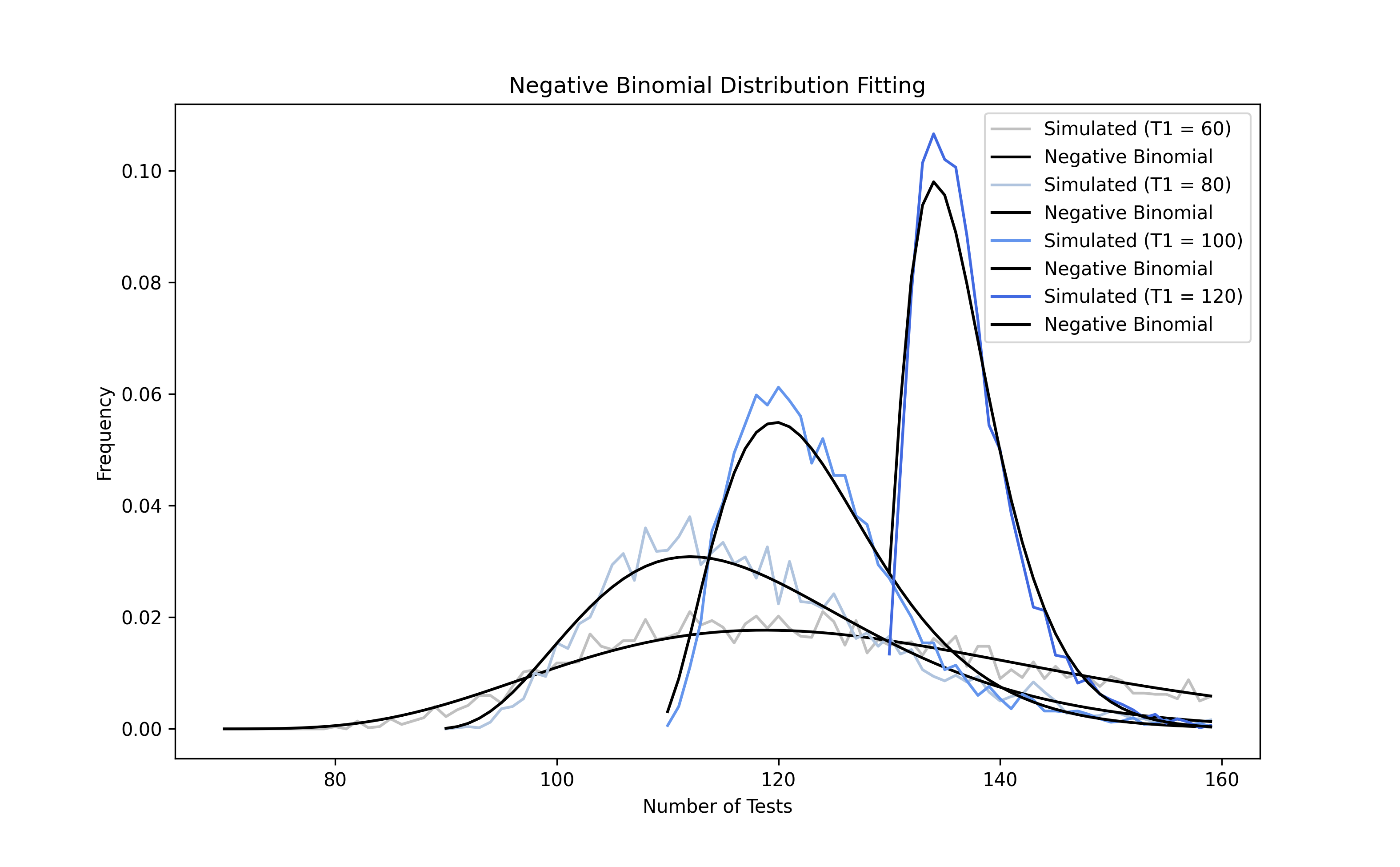} 
    \caption{Comparison of probability mass function of $G$ (via simulation) and approximating negative binomial distribution $Z$ (via calculation) for 2-Stage COMP with a variety of values of $T_1$ ($n=500$, $k=10$, $p=0.1$).} 
    \label{figure3} 
    \end{figure}

That is, if we write $Z$ for the negative binomial approximation with parameters given by \eqref{eq:paramest}, we know that for any $g$ the tail probability $\pr( G > g) \simeq \pr(Z > g)$. Using a standard large deviations argument, we know:
\begin{proposition} \label{prop:exptails} 
If $Z$ is negative binomial with parameters $r$ and $q$, then for any $g > \ep Z$: 
\begin{equation} \label{eq:LDub}
\pr(Z \geq g)  \leq \exp \left( - (g+r) D_{KL} \left( \left. \frac{g}{g+r} \right\| 1-q \right) \right), 
\end{equation}
where we write $D_{KL}(v \| w) =  v \log_e(v/w) + (1-v) \log_e((1-v)/(1-w))$ for the Kullback--Leibler divergence from a Bernoulli($v$) random variable to a Bernoulli($w$).
\end{proposition}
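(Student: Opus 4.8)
The plan is to prove this via the standard Chernoff bounding argument, taking advantage of the fact that the negative binomial distribution has an explicit moment generating function. First I would record that, for $Z\sim\mbox{NB}(r,q)$ with the parameterization of \eqref{eq:negbinpar},
\begin{equation*}
\ep e^{tZ}=\left(\frac{q}{1-(1-q)e^t}\right)^r,
\end{equation*}
valid for all $t$ with $e^t<1/(1-q)$; this follows immediately from the generalized binomial series $\sum_{z\geq0}\binom{z+r-1}{z}x^z=(1-x)^{-r}$ applied with $x=(1-q)e^t$, using the mass function in \eqref{eq:negbinpar}. In particular the MGF is finite in a neighbourhood of $0$, so the argument below is legitimate.

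Next, for any $t\geq0$ with $e^t<1/(1-q)$, Markov's inequality applied to $e^{tZ}$ gives $\pr(Z\geq g)\leq e^{-tg}\ep e^{tZ}=e^{-tg}\bigl(q/(1-(1-q)e^t)\bigr)^r$. Writing $u=e^t$, the problem reduces to minimising $\phi(u):=-g\log u+r\log q-r\log(1-(1-q)u)$ over $u\in[1,1/(1-q))$. This is a one-dimensional convex minimisation: solving $\phi'(u)=0$ yields the unique stationary point $u^\star=g/\bigl((1-q)(g+r)\bigr)$. The hypothesis $g>\ep Z=r(1-q)/q$ is precisely what guarantees $u^\star\geq1$, so that $t^\star=\log u^\star\geq0$ is an admissible Chernoff parameter, while $r>0$ guarantees $u^\star<1/(1-q)$; hence $u^\star$ is an interior feasible point and is the genuine minimiser of $\phi$ on the admissible range.

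Finally I would substitute $u^\star$ back in. Using $1-(1-q)u^\star=r/(g+r)$ and $\log u^\star=\log g-\log(1-q)-\log(g+r)$, the exponent $\phi(u^\star)$ collapses, after gathering the terms, to $(g+r)\log(g+r)-g\log g-r\log r+g\log(1-q)+r\log q$, which one recognises directly as $-(g+r)\,D_{KL}\bigl(\tfrac{g}{g+r}\,\|\,1-q\bigr)$ upon expanding the definition of $D_{KL}$ (noting $(g+r)\cdot\tfrac{g}{g+r}=g$ and $(g+r)\cdot\tfrac{r}{g+r}=r$). Everything here is routine once the MGF is in hand; I expect the only point that needs care is the bookkeeping matching the admissibility condition $t^\star\geq0$ with the stated hypothesis $g>\ep Z$, and then the mechanical algebra identifying the optimised exponent with the Kullback--Leibler form. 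This bound is the negative binomial analogue of the familiar Chernoff bounds for the Poisson and binomial distributions.
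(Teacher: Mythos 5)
Your proposal is correct and follows essentially the same route as the paper's proof in Appendix D: Markov's inequality applied to $e^{uZ}$ with the explicit negative binomial moment generating function, optimisation at $u^* = \log\bigl(g/((g+r)(1-q))\bigr)$ (which is admissible precisely because $g > \ep Z$), and algebraic identification of the optimised exponent with the Kullback--Leibler form. The only difference is cosmetic: you carry out the final simplification explicitly, whereas the paper substitutes $u^*$ and notes that $q/(1-(1-q)e^{u^*}) = q(g+r)/r$ before reading off the result.
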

\begin{proof} See Appendix \ref{app:proofexptails}. \end{proof}

Hence, again in the linear scenario ($k=\beta n$), taking $T_1 = k \beta e \log( (1-\beta)/\beta e)$ tests in the first stage (as suggested by \eqref{eq:topt}) and $T_2 = k c_2$ in the second,  then the probability of failure will be 
\begin{equation}
 \pr( G > T_2-k) = \pr \left( G > k (c_2-1) \right) \simeq \pr \left( Z > k (c_2 -1) \right),
 \end{equation}
which we can bound using Proposition \ref{prop:exptails}. Using the explicit bounds above, we obtain an upper bound decaying exponentially in $k$.

\begin{figure}[!ht] 
    \centering
    \includegraphics[width=0.9\textwidth]{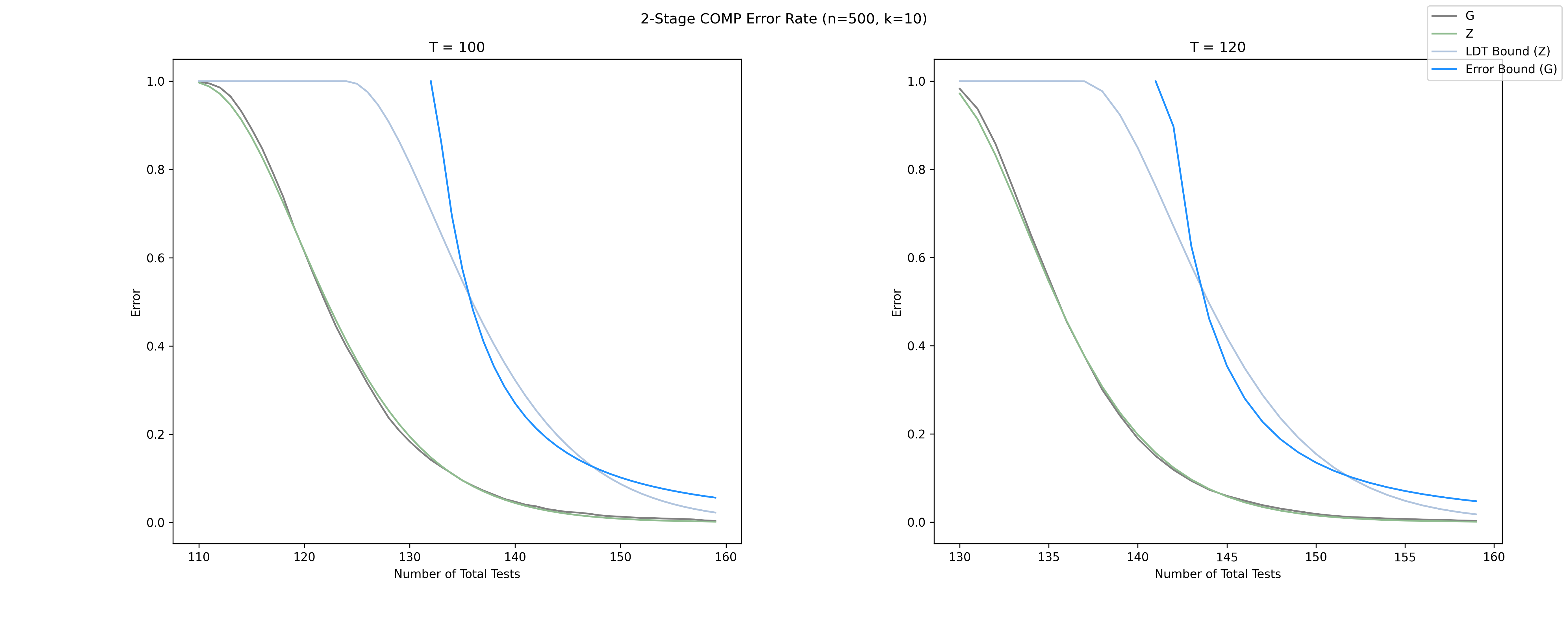} 
    \caption{Comparison of error probability for 2-Stage COMP - via simulation for $G$ and direct calculation of negative binomial approximation $Z$. Upper bounds implied by \eqref{eq:chebya} and \eqref{eq:LDub} are provided for comparion ($n=500$, $k=10$, $p=0.1$).} 
    \label{figure4} 
    \end{figure}
    
In Figure \ref{figure4} we see that the negative binomial approximation $Z$ well approximates the distribution of $G$ and that the upper bounds implied by \eqref{eq:chebya} and \eqref{eq:LDub} are somewhat tight.

This analysis could be extended to cover a two-stage version of the DD algorithm of \cite{johnson33}. In Stage 1 of this algorithm there would potentially be some items which could be confirmed as `definitely defective' (because they appear in some positive test only otherwise containing items which are guaranteed by other tests to be non-defective). Such items would not need individual testing in Stage 2, which could potentially reduce the number of tests required by up to $k$; this could be significant in the linear regime at least. However, we leave this question as further work, for reasons of space and due to the complexity of the analysis of non-adaptive DD in \cite{johnson33}.

\section{Conclusion}\label{sec:conc}

We have identified the key role played by $G$, the number of intruding non-defective items, in group testing algorithms. Under the standard Bernoulli testing strategy, we have identified the distribution of $G$, given explicit expressions for its falling moments and shown how it can be well approximated by a negative binomial distribution with given parameters. This allows us to deduce results concerning the performance of the COMP and DD group testing algorithms.

\appendix

\section{Proof of Proposition \ref{prop:ass}} \label{app:proofass}

\begin{proof}
We use Proposition 1 of \cite{fortuin}, which shows that it is enough to verify that the $G_i$ satisfy the so-called FKG condition:
\begin{equation} \label{eq:fkg}
\pr( \vc{G} = \vc{x} \vee \vc{y}) \pr( \vc{G} = \vc{x} \wedge \vc{y})
\geq \pr( \vc{G} = \vc{x}) \pr( \vc{G} = \vc{y}),\end{equation}
where  $\vee$ and $\wedge$ represent the maximum and minimum 
respectively. 

By independence, we can describe the distribution of $\vc{G}$ conditional on $M_0$, the number of negative tests. Recall that $M_0$ has a binomial distribution with parameters $T$ and $q_0$. For any $\vc{g}$, we can write
\begin{align*} \pr( \vc{G} = \vc{g}) & = \sum_m \pr(M_0=m) P_m^{w(\vc{g})} (1-P_m)^{n-k-w(\vc{g})} \\
& =  \sum_m \pr(M_0=m) (1-P_m)^{n-k} R_m^{w(\vc{g})} ,
\end{align*}
where $P_m = (1-p)^m$ and $R_m = P_m/(1-P_m)$. This follows since we can check which tests the defective items appear in first, and then each non-defective item is independently intruding with probability $P_m$, since it must not appear in the $m$ negative tests.

Using this expression, and writing $\pr( \vc{G} = \vc{x} \vee \vc{y}) \pr( \vc{G} = \vc{x} \wedge \vc{y})$ in the form
$$ \left( \sum_m \pr(M_0 = m) \pr(  \vc{G} = \vc{x} \vee \vc{y} | M_0 =m) \right) \left( \sum_\ell \pr(M_0 = \ell) \pr(  \vc{G} = \vc{x} \wedge \vc{y} | M_0 =\ell) \right)$$
we can verify the FKG condition \eqref{eq:fkg} by writing
\begin{align*}
\lefteqn{  \pr( \vc{G} = \vc{x} \vee \vc{y}) \pr( \vc{G} = \vc{x} \wedge \vc{y}) - \pr( \vc{G} = \vc{x}) \pr( \vc{G} = \vc{y}) 
 } \\
& = \sum_{m,\ell} 
\pr(M_0 =m) \pr(M_0= \ell) (1-P_m)^{n-k} (1-P_\ell)^{n-k} \left\{ 
R_m^{w(\vc{x} \vee \vc{y})} R_\ell^{w(\vc{x} \wedge \vc{y})}
- R_m^{w(\vc{x})} R_\ell^{w(\vc{y})}
\right\} \\
& =:  \sum_{m,\ell} 
\pr(M_0 =m) \pr(M_0= \ell) (1-P_m)^{n-k} (1-P_\ell)^{n-k}
\alpha_{m,\ell}(\vc{x}, \vc{y}). 
\end{align*}
We can pair up these $\alpha_{m,\ell}$ terms, noticing the fact 
that $w(\vc{x} \vee \vc{y}) + w(\vc{x} \wedge \vc{y}) = w(\vc{x}) +  w(\vc{y})$ means that $\alpha_{m,m}$ vanishes, and that in general
we can write
$$ \alpha_{m,\ell} + \alpha_{\ell,m} = \frac{1}{R_m^{w_+} R_\ell^{w_+}}
\left( R_m^{w_+} R_\ell^{w(\vc{x})} - R_\ell^{w_+} R_m^{w(\vc{x})}
\right)\left( R_m^{w_+} R_\ell^{w(\vc{y})} - R_\ell^{w_+} R_m^{w(\vc{y})}
\right),
$$
where for brevity we write $w_+ = w(\vc{x} \vee \vc{y})$ and $w(\vc{x} \wedge \vc{y}) 
= w(\vc{x}) + w(\vc{y}) - w_+$. Observe that since $w_+ \geq w(\vc{x})$ and $w_+ \geq w(\vc{y})$ both these bracketed terms have the same sign, so $\alpha_{m,\ell} + \alpha_{\ell,m} \geq 0$ and the FKG condition is satisfied.
\end{proof}

\section{Proof of Theorem \ref{thm:momconv}} \label{sec:momconv}

\begin{proof} Write $L = (n-k)$ for the number of non-defective items.
We know that
\begin{align*}
\fm{s}{G} & = \frac{L!}{(L-s)!} \left( 1 - q_0 \left(1-(1-p)^s
\right) \right)^T, \\
\fm{s}{Z} & = \frac{\Gamma(r+s)}{\Gamma(r)} \left( \frac{1-q}{q}
\right)^s,
\end{align*}
where we write $q_0 = (1-p)^k$ for brevity. By moment matching, we know that $L(1-q_0 p)^T = r(1-q)/q$, so the ratio
\begin{align}
\frac{ \fm{s}{G}}{\fm{s}{Z}} & = \left( \frac{L!}{(L-s)! L^s}
\frac{\Gamma(r)r^s}{\Gamma(r+s)} \right) \left(
\frac{ 1 - q_0 \left(1-(1-p)^s \right)}{(1-q_0 p)^s} \right)^T\nonumber \\
& = \left( \frac{L!}{(L-s)! L^s}
\frac{\Gamma(r)r^s}{\Gamma(r+s)} \right) \left( (1-q_0)
\left( \frac{1}{1-q_0 p} \right)^s + q_0 \left( \frac{1-p}{1-q_0 p}
\right)^s
\right)^T. \label{eq:tohandle}
\end{align}
We will treat the two bracketed (falling factorial and $T$th power) terms of \eqref{eq:tohandle} separately.

\begin{enumerate}
    \item {\bf Falling factorial term} 
Note that
\begin{equation} \label{eq:fallfact}
\frac{L!}{(L-s)! L^s}
\frac{\Gamma(r)r^s}{\Gamma(r+s)} = \frac{L(L-1) \ldots (L-s+1)}{L^s}
\frac{r^s}{r(r+1) \ldots (r+s-1)} \leq 1,
\end{equation}
by a termwise comparison. Similarly, we can bound \eqref{eq:fallfact} from below using the arithmetic mean-geometric mean inequality as
\begin{eqnarray*}
 \frac{L!}{(L-s)! L^s}
\frac{\Gamma(r)r^s}{\Gamma(r+s)} & \geq & \left( \frac{L-s}{L} \right)^s \frac{1}{\prod_{i=0}^{s-1} (1+i/r)} \\
& \geq & \left( \frac{L-s}{L} \right)^s \frac{1}{ \left( \frac{1}{s} \sum_{i=0}^{s-1} (1+i/r) \right)^s}
= \left( \frac{ L-s}{L(1 + (s-1)/(2r))} \right)^s.
\end{eqnarray*}

\item {\bf $T$th power term}
We can write the second term of \eqref{eq:tohandle} as 
$(1-R)^T$, where we write $\ol{q} = 1-q_0 p$,
\begin{equation}
 R =  (1-q_0) \left(1 -  \left(\frac{1}{\ol{q}} \right)^s \right) + q_0 \left( 1-  \left( \frac{1-p}{\ol{q}} \right)^s \right). \label{eq:rdef}
 \end{equation}
We first provide an upper bound on $R$ using the fact that
$\theta(x) := (1+x)^s - \left( 1 + x s + x^2 s(s-1)/2 + x^3 s(s-1)(s-2)/6 \right) \geq 0$
(this result follows using the fact that $\theta(0) = \theta'(0)$ and since $\theta''(x) = s(s-1) \left( (1+x)^{s-2} - x(s-2) -1 \right) \geq 0$ by Bernoulli's inequality). Equivalently, for any $x$  we can write $1-(1+x)^s \leq - s x -  \frac{1}{2} s(s-1) \left(x^2 + x^3 (s-2)/3 \right) $. 

Hence, taking $x_1 = 1/\ol{q} - 1 = p q_0/\ol{q}$ and $x_2=(1-p)/\ol{q}-1 = -p(1-q_0)/\ol{q}$ respectively in the two terms of  \eqref{eq:rdef} this gives
$$
R \leq - \frac{1}{2 }s(s-1) C p^2
\left( 1 - \frac{(s-2) (1-2q_0) p}{3 \ol{q}} \right),
$$
since the linear terms cancel as $(1-q_0) x_1 + q_0 x_2 = 0$, and
where we recall that we write $C = q_0(1-q_0)/\ol{q}^2$.

We can give a complementary lower bound on $R$ using a standard argument based on the mean value theorem with $f(t) = t^s$ by rewriting \eqref{eq:rdef} to obtain
\begin{align}
 R 
& = (1-q_0) \left( f(1) -  f \left( 1 + \frac{ p q_0}{\ol{q}} \right) \right) + q_0 \left( f(1) - f \left( 1 - \frac{ p (1-q_0)}{\ol{q}} \right) \right) \nonumber \\
& = -(1-q_0) \frac{p q_0}{\ol{q}} f'(\beta) +
q_0 \frac{p(1-q_0)}{\ol{q}} f'(\alpha) \nonumber \\
& = -\frac{p q_0(1-q_0)}{\ol{q}} (\beta - \alpha) f''(\gamma),
\label{eq:rbound}
\end{align}
for some $\alpha \in (1 - \frac{ p (1-q_0)}{\ol{q}},1)$, $\beta \in (1, 1 + \frac{ p q_0}{\ol{q}})$ and  $\gamma \in (\alpha,\beta)$. Then since we know $(\beta-\alpha) \leq \frac{p}{\ol{q}}$ and $\gamma \leq \beta \leq 1 + p q_0/\ol{q}$, the expression \eqref{eq:rbound} gives
$$ R \geq -\frac{p^2 q_0(1-q_0)}{\ol{q}^2} s(s-1) \left(1+ \frac{p q_0}{\ol{q}} \right)^{s-2} = - s(s-1) C p^2 \left( \frac{1}{\ol{q}} \right)^{s-2},$$ 
meaning that $$(1-R)^T \leq 
\exp \left( s(s-1) C p^2 T q_0^{2-s} \right),$$
and the proof is complete.
\end{enumerate}
\end{proof}

\section{Proof of Theorem \ref{thm:approx}} \label{app:scproof}

In proving the theorem, our strategy will be to replace $G$ by the mixed Poisson version $G^{\prime\prime}$ defined below (bounding the error in making this replacement). We then approximate $G^{\prime\prime}$ by a negative binomial distribution by noting that a negative binomial can itself be written as a mixed Poisson with gamma mixing distribution. Lemma \ref{lem:NBtoG} below allows us to transfer our negative binomial approximation problem for a mixed Poisson distribution into a gamma approximation problem for the mixing distribution. We may then bound the appropriate distance from our mixing distribution to gamma to complete the proof.

Before proceeding with this programme, we first define a further metric we will need: the Wasserstein distance, denoted by $d_W$. For any non-negative, real-valued random variables $X$ and $Y$, we define
\begin{equation} \label{eq:defwass}
d_W(X,Y)=\sup_{h\in\mathcal{H}_W}|\ep h(X)-\ep h(Y)|=\int_0^\infty|\pr(X\leq x)-\pr(Y\leq x)|\,dx\,,
\end{equation}
where $\mathcal{H}_W$ is the set of absolutely continuous functions $h:\mathbb{R}^+\to\mathbb{R}$ with $\|h^\prime\|\leq1$, and $\|\cdot\|$ is the supremum norm defined by $\|g\|=\sup_x|g(x)|$ for any real-valued function $g$.

\begin{lemma}\label{lem:NBtoG}
Let $Z$ have a negative binomial distribution with parameters $r$ and $q$, and let $H$ have a mixed Poisson distribution, $H|\xi\sim\po(\xi)$ for some positive random variable $\xi$.
Let $\eta\sim\Gamma(r,\lambda)$ have a gamma distribution with density function $\frac{\lambda^r}{\Gamma(r)}x^{r-1}e^{-\lambda x}$, for $x>0$, where $\lambda=\frac{q}{1-q}$. 
Then
\[
d_{TV}(H,Z)\leq\frac{2-q}{1-q} d_W(\xi,\eta)\,.
\]
\end{lemma}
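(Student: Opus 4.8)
The plan is to exploit the mixed-Poisson structure on both sides and reduce the total variation bound between $H$ and $Z$ to a Wasserstein bound between the mixing distributions $\xi$ and $\eta$. First I would recall that since $Z\sim\mbox{NB}(r,q)$ can be written as a mixed Poisson with gamma mixing distribution, namely $Z\mid\eta\sim\po(\eta)$ with $\eta\sim\Gamma(r,\lambda)$ and $\lambda=q/(1-q)$ (this is the standard gamma--Poisson identity, and one checks the resulting mass function against \eqref{eq:negbinpar}), both $H$ and $Z$ are mixed Poisson distributions driven by $\xi$ and $\eta$ respectively. So it suffices to compare two mixed Poisson laws whose mixands differ.

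The core step is a Stein-type smoothing estimate for the Poisson mixing map. I would write, for any set $A\subseteq\ZZ^+$,
\[
\pr(H\in A)-\pr(Z\in A)=\ep\big[\psi_A(\xi)\big]-\ep\big[\psi_A(\eta)\big],\qquad \psi_A(\lambda):=\sum_{j\in A}e^{-\lambda}\frac{\lambda^j}{j!}=\pr(\po(\lambda)\in A).
\]
To bound this by $d_W(\xi,\eta)$ via the definition \eqref{eq:defwass}, it is enough to show $\psi_A$ is Lipschitz with a constant that can be absorbed into the factor $(2-q)/(1-q)$. The derivative is $\psi_A'(\lambda)=\pr(\po(\lambda)\in A-1)-\pr(\po(\lambda)\in A)$, whose absolute value is at most the total variation distance between $\po(\lambda)$ and $\po(\lambda)+1$; a classical bound (essentially Barbour--Holst--Janson / the Stein factor for Poisson) gives $\|\psi_A'\|\leq 1-e^{-\lambda}\leq 1$ uniformly, but to get the precise constant $(2-q)/(1-q)$ one argues more carefully. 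The clean route is to not bound $\psi_A'$ uniformly but instead to integrate against the mixing distribution: using the coupling characterisation of $d_{TV}$ and $d_W$, take an optimal Wasserstein coupling $(\xi,\eta)$ and a Poisson coupling conditional on the mixands, so that $d_{TV}(H,Z)\leq\pr(H\neq Z)$ decomposes according to whether the two Poisson variables with means $\xi$ and $\eta$ can be coupled to agree; the probability they disagree is controlled by $\ep|\xi-\eta|$ times a per-unit-mass cost. The gamma tilting structure (that $\eta$ has a density bounded appropriately relative to $\lambda$) is where the $(2-q)/(1-q)=1+\lambda^{-1}\cdot\text{(something)}$ shape comes from; concretely one uses that a $\Gamma(r,\lambda)$-mixed Poisson has a mass-function ratio $\pr(Z=z)/\pr(Z=z-1)$ bounded in terms of $q$, giving the Stein factor $\sup_A\|f_A\|$ or $\sup_A\|\Delta f_A\|$ for the negative binomial equal to a quantity of order $1/(1-q)$.

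Concretely, I would structure the write-up as: (i) state and verify the gamma--Poisson representation of $Z$; (ii) invoke the negative binomial Stein equation \eqref{eq:steinchen} and the known bound on the solutions $f_A$ — specifically that $\|\Delta f_A\|\le (2-q)/[(1-q)(r+ \text{something})]$ or the cruder $\|f_A\|\le 1/(1-q)$ from \cite{brown3} — combined with the fact that for a mixed Poisson $H\mid\xi\sim\po(\xi)$ one has the exact identity $\ep[\xi\,g(H+1)\mid\xi]=\ep[H\,g(H)\mid\xi]$, i.e. the Poisson Stein identity conditionally; (iii) plug $g=f_A$ into \eqref{eq:steinchen2} for $Y=H$, take expectations over $\xi$, and recognise $\ep[(1-q)(r+H)f_A(H+1)-Hf_A(H)]=(1-q)\ep[(r-\lambda^{-1}\xi)f_A(H+1)]$ after using the conditional Poisson identity, where $r=\lambda^{-1}\ep\eta$; (iv) rewrite the resulting expression as $(1-q)\lambda^{-1}\ep[(\eta-\xi)\,\widetilde f_A(\xi)]$ for a suitable smoothing $\widetilde f_A$ of $f_A$ with $\|\widetilde f_A'\|$ controlled, and bound this by $(1-q)\lambda^{-1}\|\widetilde f_A'\|\,d_W(\xi,\eta)$; (v) collect constants: $(1-q)\lambda^{-1}=1-q$ times $(1-q)/q$... here one must track that $\|\widetilde f_A'\|\le \frac{2-q}{q}$-type bound, so the product is $\frac{2-q}{1-q}$ — I would lift the precise Stein-factor bound for the negative binomial from \cite{brown3} to get exactly this constant.

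The main obstacle I expect is step (iv)--(v): getting the \emph{exact} constant $(2-q)/(1-q)$ rather than some larger multiple. This requires the sharp uniform bound on the negative binomial Stein solutions (or equivalently on the smoothed map $\lambda\mapsto\pr(\po(\lambda)\in A)$ reweighted by the gamma density), and care in how the conditional Poisson identity interacts with the gamma mixing — in particular, distinguishing $r=\lambda^{-1}\ep\eta$ from the running variable and making sure no $r$-dependence leaks into the final constant. Everything else (the gamma--Poisson representation, taking expectations, the Wasserstein duality \eqref{eq:defwass}) is routine, but the Stein factor is the place where the stated clean form of the bound is won or lost.
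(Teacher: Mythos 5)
Your overall architecture is the paper's: write $Z$ as a gamma-mixed Poisson, set up the negative binomial Stein equation \eqref{eq:steinchen}, use the conditional Poisson identity $\ep[Hf(H)\mid\xi]=\xi\,\ep[f(H+1)\mid\xi]$ to push the Stein operator down to the mixing variable, recognise the result as a gamma Stein operator $h(x)=xg'(x)+(r-\lambda x)g(x)$ applied to $\xi$ versus $\eta$, and bound $|\ep h(\xi)-\ep h(\eta)|\le\|h'\|\,d_W(\xi,\eta)$. But there is a genuine gap exactly where you flag it. First, your step (iii) identity is wrong as written: conditioning correctly gives $(1-q)r\,\ep[f(H+1)\mid\xi]+(1-q)\xi\,\ep[f(H+2)\mid\xi]-\xi\,\ep[f(H+1)\mid\xi]$, i.e.\ the expression retains a genuine derivative term $\xi g'(\xi)$ with $g(x)=(1-q)\ep[f(H+1)\mid\xi=x]$; it does not collapse to $(1-q)\ep[(r-\lambda^{-1}\xi)f(H+1)]$. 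Second, and more seriously, differentiating $h$ produces the term $x g''(x)$, i.e.\ $(1-q)\ep[H\Delta^2 f(H)\mid\xi=x]$ after another application of the Poisson identity, and none of the Stein factors you cite ($\|f\|\le 1/(1-q)$, $|\Delta f(j)|\le 1/j$) controls a second difference weighted by $H$. The paper's proof only closes because of the algebraic recombination
\begin{equation*}
\ep[H\Delta^2f(H)]+(1+r)\ep[\Delta f(H+1)]=r\,\ep[\Delta(D^{(r)}f)(H)],\qquad D^{(r)}f(j)=\Bigl(\tfrac{j}{r}+1\Bigr)f(j+1)-\tfrac{j}{r}f(j),
\end{equation*}
together with the specific bound $\sup_j|\Delta(D^{(r)}f)(j)|\le\frac{2-q}{(1-q)r}$ from Ross (2013). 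That is the ingredient your sketch is missing, and without it the constant $\frac{2-q}{1-q}$ (indeed any finite $r$-free constant) is not obtained by this route; the remaining terms are then handled by $|H\Delta f(H)|\le 1$ and $\|f\|\le 1/(1-q)$, giving $(2-q)+\lambda(2-q)=(2-q)/(1-q)$.

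A separate observation: the coupling route you float in passing and then abandon is actually sound and would prove something stronger. Since $d_{TV}(\po(a),\po(b))\le 1-e^{-|a-b|}\le|a-b|$ (couple by writing the larger-mean Poisson as the smaller one plus an independent $\po(|a-b|)$), taking an optimal $L^1$ coupling of $(\xi,\eta)$ and then coupling the conditional Poissons gives $d_{TV}(H,Z)\le\ep|\xi-\eta|=d_W(\xi,\eta)$, i.e.\ constant $1$ rather than $\frac{2-q}{1-q}$. There is no "per-unit-mass cost" of $\frac{2-q}{1-q}$ to recover; conflating the two routes is what leads you astray. So either commit to the coupling argument (simpler, better constant) or supply the $\Delta(D^{(r)}f)$ Stein factor; as it stands the write-up completes neither.
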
  
\begin{proof}
It can be easily checked by direct calculation that $Z$ has the mixed Poisson distribution $Z|\eta\sim \po(\eta)$. Following Stein's method for  negative binomial approximation (see \cite{barbour7,brown3,ross2013} and the review in Section \ref{sec:scmethod}), we let $f=f_A$ satisfy $f(0)=0$ and (see
\eqref{eq:steinchen})
\[
(1-q)(r+j)f(j+1)-jf(j) = I(j\in A)-\pr(Z\in A)\,,
\] 
where $A\subseteq\ZZ^+$, so that we may write (see \eqref{eq:steinchen3})
\[
d_{TV}(H,Z)=\sup_{A\subseteq\ZZ^+}|\ep[(1-q)(r+H)f(H+1)-Hf(H)]|\,.
\]
We note the following bounds on $f$, taken from Lemma 3 of \cite{brown3} and Lemmas 2.2 and 2.3 of \cite{ross2013}, respectively:
\begin{equation}\label{eq:NBmagic}
\sup_j|f(j)|\leq\frac{1}{1-q}\,,\qquad|\Delta f(j)|\leq\frac{1}{j}\,,\qquad\sup_j|\Delta(D^{(r)}f)(j)|\leq\frac{2-q}{(1-q)r}\,,
\end{equation}
where $\Delta f(j)=f(j+1)-f(j)$ and $D^{(r)}f(j)=\left(\frac{j}{r}+1\right)f(j+1)-\frac{j}{r}f(j)$, so that
\[
\Delta(D^{(r)}f)(j)=\left(\frac{j+1}{r}+1\right)\Delta f(j+1)-\frac{j}{r}\Delta f(j)\,.
\]
Now, we define $g(x)=(1-q)\ep[f(H+1)|\xi=x]$. Using the fact that $H$ has a mixed Poisson distribution, a direct calculation shows that $g^\prime(x)=(1-q)\ep[\Delta f(H+1)|\xi=x]$, and similarly for the second derivative of $g$.
We also note that (see page 12 of \cite{barbour} for example), since $H$ has a mixed Poisson distribution, 
\begin{equation}\label{eq:MP}
\xi\ep[f(H+1)|\xi]=\ep[Hf(H)|\xi]\,.
\end{equation} This then allows us to write
\begin{align*}
\ep[(1-q)&(r+H)f(H+1)-Hf(H)]\\
&=\ep\ep[(1-q)(r+H)f(H+1)-Hf(H)|\xi]\\
&=\ep\ep[(1-q)rf(H+1)+(1-q)\xi f(H+2)-\xi f(H+1)]\\
&=\ep[\xi g^\prime(\xi)+(r-\lambda\xi)g(\xi)]\,.
\end{align*}
This latter expression is closely related to Stein's method for gamma approximation, as developed by Luk \cite{luk1994}; see also \cite{gaunt2017} and references therein for more recent developments. In particular, it is known that since $\eta$ has a gamma distribution, $\ep[\eta g^\prime(\eta)+(r-\lambda\eta)g(\eta)]=0$. Letting $h(x)=xg^\prime(x)+(r-\lambda x)g(x)$ (and noting that our earlier calculations show that $h$ is differentiable), we may therefore write
\[
d_{TV}(H,Z)=\sup_{A\subseteq\ZZ^+}|\ep h(\xi)-\ep h(\eta)|\leq\sup_{A\subseteq\ZZ^+}\|h^\prime\|d_W(\xi,\eta)\,.
\]
To complete the proof, it remains only to bound $|h^\prime(x)|\leq\frac{2-q}{1-q}$. To that end, we note that
\begin{align*}
h^\prime(x)&=xg^{\prime\prime}(x)+g^\prime(x)+(r-\lambda x)g^\prime(x)-\lambda g(x)\\
&=(1-q)\big(x\ep[\Delta^2f(H+1)|\xi=x]+(1+r-\lambda x)\ep[\Delta f(H+1)|\xi=x]\\
&\hspace{20pt}-\lambda\ep[f(H+1)|\xi=x]\big)\\
&=(1-q)\big(\ep[H\Delta^2f(H)|\xi=x]+(1+r)\ep[\Delta f(H+1)|\xi=x]-\lambda\ep[H\Delta f(H)|\xi=x]\\
&\hspace{20pt}-\lambda\ep[f(H+1)|\xi=x]\big)\\
&=(1-q) r\ep[\Delta(D^{(r)}f)(H)|\xi=x]\\
&\hspace{20pt}- (1-q) \lambda\left(\ep[H\Delta f(H)|\xi=x]+\ep[f(H+1)|\xi=x]\right)\,,
\end{align*}
where the penultimate inequality again uses (\ref{eq:MP}). Using the bounds (\ref{eq:NBmagic}), we therefore have
\[
|h^\prime(x)|\leq\frac{(1-q)r(2-q)}{(1-q)r}+(1-q)\lambda\left(1+\frac{1}{1-q}\right)=2-q+(1-q)\lambda+\lambda=\frac{2-q}{1-q}\,,
\]
as required, since $\lambda = q/(1-q)$.
\end{proof}

\begin{proof}[Proof of Theorem \ref{thm:approx}]
We now use Lemma \ref{lem:NBtoG} to establish Theorem \ref{thm:approx}. Recalling that $M_0\sim\bin(T,q_0)$, we define $M^\prime\sim\po(Tq_0)$. Similarly, recalling that $G$ has the mixed binomial distribution $G|M_0\sim \bin(n-k,(1-p)^{M_0})$, we define $G^\prime$ and $G^{\prime\prime}$ as follows:
\begin{align*}
G^\prime|M^\prime&\sim \bin(n-k,(1-p)^{M^\prime})\,,\\
G^{\prime\prime}|M^\prime&\sim\po((n-k)(1-p)^{M^\prime})\,.
\end{align*}
We then write
\[
d_{TV}(G,Z)\leq d_{TV}(G,G^\prime)+d_{TV}(G^\prime,G^{\prime\prime})+d_{TV}(G^{\prime\prime},Z)\,,
\]
and bound each of these three terms separately.

Firstly, we note that
\[
d_{TV}(G,G^\prime)\leq d_{TV}(M_0,M^\prime)\leq2\min\left\{\frac{q_0}{4\sqrt{1-q_0}},\frac{1}{\sqrt{T}}\alpha(q_0)+\frac{1}{\sqrt{2\pi e}}\log\left(\frac{1}{\sqrt{1-q_0}}\right)\right\}\,,
\]
where the final inequality comes from the main result of Weba \cite{weba1999} combined with the sharpened value 0.4748 of the constant in the Berry--Esseen theorem due to Shevtsova \cite{shevtsova2011}.

Secondly, using the fact that $d_{TV}(\bin(m,p^\prime),\po(mp^\prime))\leq p^\prime$ for any parameters $m$ and $p^\prime$ (see page 8 of \cite{barbour}), we have that
\[
d_{TV}(G^\prime,G^{\prime\prime})\leq\ep[(1-p)^{M^\prime}]=e^{-Tpq_0}\,.
\]

To complete the proof, it remains only to bound $d_{TV}(G^{\prime\prime},Z)$. To that end, we apply Lemma \ref{lem:NBtoG}, noting that the parameters $q$ and $r$ of $Z$ are chosen such that the first two moments of $Z$ match those of $G^{\prime\prime}$: straightforward calculations show that $\ep[G^{\prime\prime}]=\mu$ and $\var(G^{\prime\prime})=\sigma^2$. Lemma \ref{lem:NBtoG} gives
\[
d_{TV}(G^{\prime\prime},Z)\leq\frac{2-q}{1-q}d_W(\xi,\eta)\,,
\]
where $\xi=(n-k)(1-p)^{M^\prime}$ and $\eta\sim\Gamma(r,\lambda)$ has a gamma distribution with rate parameter $\lambda=\frac{1-q}{q}$. Using scaling properties of the gamma distribution and the Wasserstein distance, we have that
\[
d_W(\xi,\eta)=d_W((n-k)\xi^\prime,(n-k)\eta^\prime)=(n-k)d_W(\xi^\prime,\eta^\prime)\,,
\]
where $\xi^\prime=(1-p)^{M^\prime}$ and $\eta^\prime\sim\Gamma(r,Kr)$, with $K$ as in the statement of the theorem. We then write
\begin{equation} \label{eq:finalterm}
d_W(\xi^\prime,\eta^\prime)=\int_0^1|\pr(\xi^\prime>x)-\pr(\eta^\prime>x)|\,dx+\int_1^\infty\pr(\eta^\prime>x)\,dx\,,
\end{equation}
and bound the two terms on the right-hand side of \eqref{eq:finalterm} separately. Beginning with the final term of \eqref{eq:finalterm}, 
we note that, for $Z \sim \Gamma(\alpha,\beta)$, a standard Chernoff bounding argument gives us that, for any $z > \alpha/\beta$ and $t > 0$,
\begin{equation}
    \pr(Z > z) \leq \frac{\ep e^{tZ}}{e^{t z}} = \frac{ (1-t/\beta)^{-\alpha}}{e^{t z}} = \left( \frac{ \beta e}{\alpha} \right)^\alpha \exp(-\beta z) z^\alpha, \label{eq:chernoffbd}
\end{equation}
where we take the optimal choice that $t = \beta - \alpha/z$. (Observe that the same argument applies to bound 
\begin{equation}
    \pr(Z < z) \leq  \left( \frac{ \beta e}{\alpha} \right)^\alpha \exp(-\beta z) z^\alpha, \label{eq:chernoffbd2}
\end{equation}
for $z < \beta/\alpha$, simply by again taking $t = \beta - \alpha/z <0$).
Since $\eta^\prime \sim \Gamma(r, rK)$, the expression \eqref{eq:chernoffbd} tells us that 
$$ \pr( \eta^\prime > x) \leq (K e)^r \exp(-K r x) x^r.$$
This allows us to write the final term of \eqref{eq:finalterm} as
\begin{eqnarray} \int_1^\infty \pr( \eta^\prime > x) dx
& \leq & \frac{ e^r \Gamma(r+1)}{ r^{r+1} K}
\int_1^\infty \frac{ (K r)^{r+1}}{\Gamma(r+1)} x^r \exp(-K r x) dx \label{eq:gammaint} \\
& \leq & \frac{ e^r \Gamma(r+1)}{ r^{r+1} K}
\left( \frac{K r e}{r+1} \right)^{r+1} \exp(-K r) \nonumber \\
& = & \frac{e^{2r+1} \Gamma(r+1) K^r}{(r+1)^{r+1}} \exp(-K r) 
\leq e^{r+1} K^r \exp(-K r), \label{eq:gammaint2}
\end{eqnarray}
since we recognise the integrand in \eqref{eq:gammaint} as the density of a $\Gamma(r+1,K r)$ random variable and again apply \eqref{eq:chernoffbd}. The expression \eqref{eq:gammaint2} follows on observing that $v(r) := e^r \Gamma(r+1)/(r+1)^{r+1} \leq 1$ for $r \geq 0$. We can see this, for example, since $v(0) = 1$, and $v(r)$ is decreasing in $r$ since $\frac{d}{dr} \log v(r) = \psi(r+1) - \log(r+1) \leq 0$, where $\psi$ is the digamma function.

Finally, we write the first term of \eqref{eq:finalterm} as
\begin{align*}
\int_0^1|\pr(\xi^\prime>x)-\pr(\eta^\prime>x)|\,dx&=\int_0^1\left|\pr\left(M^\prime<\left\lceil\frac{\log(x)}{\log(1-p)}\right\rceil\right)-\pr(\eta^\prime>x)\right|\,dx  \\
&=\int_0^1\left|\widehat{\Gamma}\left(\left\lceil\frac{\log(x)}{\log(1-p)}\right\rceil,T q_0\right)-\widehat{\Gamma}\left(r,K r x\right)\right|\,dx\,.
\end{align*}
This completes the proof of the theorem.
\end{proof}

\section{Proof of Proposition \ref{prop:exptails}}
\label{app:proofexptails}

\begin{proof}
For any $u > 0$ we may write, using Markov's inequality,
\begin{align}
\pr(Z \geq g)
& \leq  \frac{ \ep \left[ e^{uZ} \right]}{e^{u g}} 
 = \exp \left( \log M_Z(u) - u g \right) \nonumber \\
& = \exp \left( r \log \left(\frac{q}{1-(1-q) e^{u}} \right) - u g \right),
\label{eq:tails}
\end{align}
where we use the fact that the moment generating function of the negative binomial distribution $\mbox{NB}(r,q)$ is $$\ep [e^{uZ} ] = M_{Z}(u) =  \left(\frac{q}{1-(1-q) e^{u}}\right)^{r}.$$
Direct calculation then gives that  the optimal value of $u$ to substitute is
$$ u^* = \log \left( \frac{g}{(g+r)(1-q)} \right),$$
(note that the assumption $g > \ep Z = r(1-q)/q$ ensures that $\frac{g}{(g+r)(1-q)} > 1$ so that $u^* > 0$ as required in \eqref{eq:tails}). The result follows on substitution in \eqref{eq:tails}, since this choice of $u = u^*$ makes $$ \frac{q}{1-(1-q) e^{u}} = \frac{q( g+r)}{r}.$$
\end{proof}
 
 \section*{Acknowledgements}
 
 The collaboration between Letian Yu and Oliver Johnson was supported by a Charles Kao bursary from the Chinese University of Hong Kong in the summer of 2021. We thank the Associate Editor and a referee for their helpful comments and suggestions.

\end{document}